\newcommand{\R}{\mathbb{R}}
\newcommand{\G}{\mathscr{G}}
\newcommand{\X}{\mathscr{X}}
\newenvironment{enumerate*}
\newtheorem{theorem}{Theorem}
\newtheorem{corollary}{Corollary}
\newtheorem{lemma}{Lemma}
\newtheorem{proposition}{Proposition}
\newtheorem{condition}{Condition}
\newtheorem{remark}{Remark}
\begin{document}

\begin{frontmatter}
\title{On posterior consistency of data assimilation with Gaussian process priors: the $2D$-Navier-Stokes equations}
\runtitle{Posterior consistency for Navier-Stokes equations}

\begin{aug}

\date{\today}

\author[A]{\fnms{Richard}~\snm{Nickl}\ead[label=e1]{nickl@maths.cam.ac.uk}},
\and
\author[B]{\fnms{Edriss S.}~\snm{Titi}\ead[label=e3]{Edriss.Titi@maths.cam.ac.uk}}
\address[A]{Department of Pure Mathematics and Mathematical Statistics, University of Cambridge, Cambridge CB3 0WB, UK. \printead[presep={\ }]{e1}}

\address[B]{Department of Applied Mathematics and Theoretical Physics, University of Cambridge, Cambridge CB3 0WA, UK; and Department of Mathematics, Texas A\&M University, College Station, TX 77840, USA \printead[presep={\ }]{e3}}
\end{aug}

\begin{abstract}

We consider a non-linear Bayesian data assimilation model for the periodic two-dimensional Navier-Stokes equations with initial condition modelled by a Gaussian process prior. We show that if the system is updated with sufficiently many discrete noisy measurements of the velocity field, then the posterior distribution eventually concentrates near the ground truth solution of the time evolution equation, and in particular that the initial condition is recovered consistently by the posterior mean vector field. We further show that the convergence rate can in general not be faster than inverse logarithmic in sample size, but describe specific conditions on the initial conditions when faster rates are possible. In the proofs we provide an explicit quantitative estimate for backward uniqueness of solutions of the two-dimensional Navier-Stokes equations.

\end{abstract}

\end{frontmatter}

\section{Introduction}

High- and infinite-dimensional Bayesian methods have been increasingly popular in statistical inference problems arising with partial differential equations (PDEs) and related uncertainty quantification tasks, we selectively mention the contributions \cite{S10, CRSW13, HSV14, SS12, CLM16, BGLFS17} and references therein. Recent years have seen substantial progress in our theoretical understanding of the performance of such algorithms in non-linear settings, see \cite{N23} for an overview and many references. The results so far have covered a variety of prototypical examples ranging from basic steady state elliptic equations \cite{N20, GN20, AN19, NvdGW20, NW20} to X-ray-type problems \cite{MNP21, MNP21a, BN21, B21, StA22} and diffusion models \cite{NR20, GR22, K21, BGK20, N22a, HR22} where posterior distributions are obtained from updating a Gaussian process prior given noisy measurements of the solution of a PDE or SDE.

A particularly important and active application area of the Bayesian inference paradigm in PDE settings is the field of \textit{data assimilation} \cite{LSZ15, RC15, EVvL22}. For example in geophysical sciences, non-linear dynamical systems are used to model the atmosphere \cite{K03}, oceans \cite{B02}, turbulence \cite{MH12} and fluid flow \cite{CDRS09}. A Bayesian model for the initial conditions is then updated whenever new measurements are taken, and posteriors can be approximately computed by Monte Carlo and filtering methods. We refer to the recent monograph \cite{EVvL22}, especially its Part II, for an overview of a variety of concrete scientific application areas in the context of data assimilation. Beyond the setting of linear systems (e.g., \cite{KvdVvZ11, KVV17, R13}), the \textit{statistical validity} of such posterior based inferences in small noise or large sample size scenarios remains largely an open question. In the present article we study non-linear data assimilation problems arising with the periodic two-dimensional \textit{Navier-Stokes equations} as a paradigm for the underlying dynamics. This non-linear PDE provides the physical description of viscous flow in fluid mechanics and forms the mathematical foundation for the above-mentioned applications  in geophysical sciences. It also constitutes one of the key PDE examples for the Bayesian approach to data assimilation and inverse problems, see \cite{CDRS09} and \cite{S10}.

In the literature often reduced or approximate models (e.g., the Lorenz model) are used, see, e.g., \cite{K03, MH12, RC15, LSZ15, EVvL22}. To develop a general understanding we avoid such reductions as much as possible. The only substantial simplification we make is that we restrict to a two-dimensional state space -- this is simply because our theoretical development relies on PDE theory for the well-posedness of global solutions to Navier-Stokes equations which is not (yet!) known to be valid in dimensions higher than $2$. We also only consider  \textit{periodic} boundary conditions to streamline the exposition and to make the proofs accessible to a wider audience, but this restriction is not essential.

For $\Omega \subset \R^2$ a bounded domain, let $L^2=L^2(\Omega)^2$ denote the space of square Lebesgue-integrable vector fields $u: \Omega \to \R^2$. The (incompressible) Navier-Stokes equations postulate the evolution in time $t \in [0,T]$ of a divergence free velocity vector field $u(t) =u_\theta(t) \in L^2(\Omega)^2, t \in [0,T],$ solving the non-linear evolution equation
\begin{align} \label{nstokes0}
\frac{du}{dt} + \nu A u +B(u,u) &=f \\
u(0)&=\theta \notag
\end{align}
where $\theta$ is the initial condition of the system, $f$ a forcing term and $\nu>0$ a viscosity parameter. The partial differential operators $A= -P\Delta$ and $B=P[(u \cdot \nabla) u]$ include a projection $P$ onto the Hilbert subspace $H \subset L^2$ of divergence free vector fields subject to some side condition (e.g., to have mean zero in the periodic case), $\Delta$ is the Laplacian, and $(u\cdot \nabla) v$ is vector calculus notation for the vector field with entries
\begin{align}\label{bee}
[(u \cdot \nabla) v]_j  = \sum_{i=1}^2  u_i \frac{\partial v_j}{\partial x_i},~~~j=1,2.
\end{align}
The Navier-Stokes equations further include the gradient of a `pressure term' which however is in the kernel of the projection operator $P$ and so does not feature in equation (\ref{nstokes0}). We will explain this further when we review this PDE in detail below.

\smallskip

The task of data assimilation begins with the specification of an initial condition $\theta$. In absence of specific background knowledge in a given experimental setting, and in part to aid the computational tasks that follow, such initial conditions are often modelled by a Gaussian random field $(\theta(x): x \in \Omega)$ over the domain $\Omega$ -- see \cite{S10, RC15, LSZ15, EVvL22} and also specifically \cite{CDRS09} in the setting of the Navier-Stokes model. The law $\mathcal L(\theta) \equiv \Pi$ of this field in an appropriate function space plays the role of the prior in (infinite-dimensional) Bayesian statistics, and its covariance structure parallels the choices of penalty norms in `variational (optimisation based) data assimilation' -- see Remark \ref{map}. Under appropriate assumptions the solutions to (\ref{nstokes0}) are unique and the random initial condition $\theta=u(0)$ determines a complete stochastic \textit{forward model}, that is, a probability distribution on the states
$$\big(u_\theta(t,x): t>0, x \in \Omega\big),~\text{that solve}~(\ref{nstokes0})~ \text{with initial condition}~ u(0)=\theta \sim \Pi,$$
of the velocity fields at all points in space $\Omega$ and time $[0,T]$. Even though the initial condition follows a Gaussian distribution, due to the non-linearity of the Navier-Stokes system, the implied stochastic model for $u_\theta(t,\cdot)$ at times $t>0$ is \textit{not} Gaussian any longer. Nevertheless, this `forward model' can be updated via Bayes' rule after measurements at discrete points $(t_i, X_{ij})$ in $(0,T] \times \Omega$ are taken. We follow here the `Eulerian measurement' scheme relevant in fluid mechanics (see Sec.3 in \cite{CDRS09}) where noisy `regression' type measurements of the velocity field of the form
\begin{equation}\label{discretem}
Y_{ij} = u_\theta(t_i, X_{ij}) + \varepsilon_{ij},~~~\varepsilon_{ij} \sim N(0,I_{\R^2}),~i=1, \dots, m,~ j=1, \dots, n,
\end{equation}
are collected. If $\Pi$ denotes the prior probability measure induced by the Gaussian process model, the posterior distribution of the initial state $\theta$ is then given by
$$d\Pi(\theta|(Y_{ij}, X_{ij}, t_i)_{1\le i \le m, ~ 1\le j \le n}) \propto \exp \Big\{-\frac{1}{2}\sum_{i,j}|Y_{ij} - u_{\theta}(t_i, X_{ij})|_{\R^2}^2 \Big\} d\Pi(\theta),$$
where $u_\theta$ is the solution of (\ref{nstokes0}) corresponding to initial condition $u(0) = \theta$. Approximate computation of such non-Gaussian posterior distributions is possible using Monte Carlo methods, and we can then use PDE forward solvers or filtering methods to retrieve posterior inferences for $u_\theta$ at times $t>0$ and points $x \in \Omega$ -- see \cite{LSZ15, RC15, CRSW13, BGLFS17, EVvL22}. Of key importance is that -- in contrast to standard non-parametric regression techniques -- such posterior estimates $u_\theta$ are themselves solutions of a Navier-Stokes system so that the algorithmic outputs retain physical interpretation (e.g., incompressibility of the flow, $\nabla \cdot u_\theta(t)=0$ at all times $t>0$).

Such Bayesian methodology  naturally addresses all main tasks of data assimilation -- `prediction, filtering, smoothing and inversion', 
cf.~p.173 in \cite{RC15} and also \cite{LSZ15} for this terminology -- which essentially amount to providing estimates and credible regions for $u(t, \cdot)$ at all times $t$. The question we address here is whether the \textit{posterior distribution} of all the states $$\big(u_\theta(t,x): t\ge 0, x \in \Omega\big),~~~ \theta \sim \Pi(\cdot|(Y_{ij}, X_{ij}, t_i)_{1\le i \le m, ~ 1\le j \le n}),$$ is \textit{statistically consistent}, that is, whether it places its mass, measured in a suitable norm in function space, near the ground truth state $(u_{\theta_0}(t,x): t \ge 0, x \in \Omega)$ of the system generated by the actual (unobserved) initial condition $u(0)=\theta_0$, and with high probability under the law of the data. Such results validate Bayesian data assimilation algorithms in a scientifically desirable `objective', that is, \textit{prior-independent} or `frequentist' way, cf.~\cite{GV17}. To the best of our knowledge, no results of this type are known in the literature at the moment. We will show in Theorem \ref{mainstat} that posterior consistency indeed occurs for a flexible class of infinite-dimensional Gaussian process prior models for $\theta$, if sufficiently many measurements are taken within the given observation horizon. That is, the asymptotics is for $N=mn \to \infty$ with $T$ fixed, corresponding to statistically highly informative data. Of course the Bayesian method can be used to quantify uncertainty even for low signal to noise ratios, in which case the influence of the prior would remain visible and our `frequentist' guarantees only demonstrate that the uncertainty about the parameter is reduced by taking more measurements. 

Our proof strategy resembles the Bayesian forward model and is based on solving the hardest data assimilation problem -- inversion for the initial condition $u(0)$ -- first. Inference for all subsequent times will then be valid too by appealing to forward Lipschitz continuity with respect to the initial data of strong solutions of the (two-dimensional) Navier-Stokes equations. Considering $T>0$ to be fixed in the asymptotics is natural in this context: for $T \to \infty$ the Navier-Stokes dynamics stabilise near a finite-dimensional global attractor \cite{CF88, R01} and hence the long time behaviour cannot be expected to provide information about an infinite-dimensional parameter space of initial conditions. This is related to the inherent unpredictability of geophysical systems in large time horizons -- see the classical contribution of Lorenz \cite{L63} and also \cite{K03}.  We further show that even when $T$ is fixed, the `logarithmic' convergence rates we obtain for the inversion problem as $N \to \infty$ cannot be improved in general, but we also discuss a set of (strong) restrictions on the initial condition in terms of the spectrum of the Stokes operator where faster rates are possible. Our results hold for fixed positive viscosity $\nu>0$ corresponding to a model of viscous flow -- small viscosity limits with dominant non-linear term may lead to a different theory but are not investigated here.

The main ingredients of our proofs consist of a) statistical theory for Bayesian non-linear inverse problems developed recently (see \cite{N23}), b) results from the PDE analysis of $2D$ Navier-Stokes equations (e.g., \cite{CF88, R01}), and c) an explicit quantitative stability (inverse continuity) estimate for the forward map $\theta \mapsto u_\theta$ given in Theorem \ref{main1} below. The latter is inspired by old work on backward in time uniqueness of solutions to the $2D$ Navier-Stokes equations in \cite{BT73}, who gave explicit estimates on the difference between the initial state in terms of the difference of the final state of the strong solutions of the system. This implies the desired backward uniqueness, but further allows one to obtain a quantitative stability estimate. These ideas extend in principle to other dissipative dynamical systems such as reaction-diffusion equations $(\partial / \partial t) u - \Delta u = f(u)$ with appropriate non-linear $f$.

\smallskip

This paper is organised as follows: the main analytical results on the $2D$ Navier-Stokes equations will be given in Subsection \ref{nstokest}, while the theory for data assimilation with Gaussian process priors is developed in Subsection \ref{datsim}. Proofs can be found in Section \ref{proofs}.

\section{Main results}

\subsection{Forward and inverse stability in the $2D$ Navier-Stokes equations}\label{nstokest}

Throughout we denote by $\Omega =[0,2\pi]^2$ the two-dimensional flat torus, i.e., opposite endpoints are identified and all functions are periodic: $u(\cdot + 2\pi e_i)=u(\cdot)$ for $i=1,2$ where  $e_1=(1,0), e_2=(0,1)$ are the canonical basis vectors in the plane. We define $C^\infty(\Omega)$ as the space of infinitely differentiable periodic functions with fundamental periodic domain $\Omega$. We also require the usual $L^2(\Omega)$ spaces of square integrable functions for Lebesgue measure $dx$, as well as the Sobolev spaces $H^m(\Omega), m \in \mathbb N,$ of functions $f \in L^2(\Omega)$ whose (weak) partial derivatives up to order $m$ lie in $L^2(\Omega)$.  When considering two-dimensional vector fields $v=(v_1, v_2): \Omega \to \R^2$ with components $v_1,v_2$ lying in some function space $\X$, we will write $v \in \X^2$ -- or sometimes even only $v \in \X$ when no confusion may arise. The divergence operation $$\nabla \cdot v = \frac{\partial}{\partial x_1}v_1 + \frac{\partial}{\partial x_2}v_2$$ for smooth vector fields extends to a linear operation $\nabla \cdot$ in the sense of (periodic Schwartz) distributions. We can then define spaces of vector fields
\begin{equation}
H = \Big\{u \in L^2(\Omega)^2:  \nabla \cdot u =0, \int_\Omega u  =0 \Big\},
\end{equation}
as well as
\begin{equation}\label{V}
V = \Big\{u \in (H^1(\Omega))^2: \nabla \cdot u =0, \int_\Omega u  =0 \Big\},
\end{equation}
and equip these spaces with inner products $\langle \cdot, \cdot \rangle_H \equiv \langle \cdot, \cdot \rangle_{L^2}$ and $$\langle u, v\rangle_V \equiv \langle \nabla u, \nabla v \rangle_{L^2} = \sum_{i,j=1}^2 \int_\Omega \frac{\partial u_i(x)}{\partial x_j} \frac{\partial v_i(x)}{\partial x_j} dx,$$ where $\nabla$ is the gradient operator. The resulting norms are denoted by $\|\cdot\|_H, \|\cdot\|_V$, respectively. One can show that $H$ and $V$ arise as the closure in $L^2(\Omega)$ and $H^1(\Omega)$, respectively, of the divergence free, non-constant trigonometric polynomials  (cf.~p.236f.~in \cite{R01}).

\smallskip

We now review some standard facts from the theory of $2D$-Navier-Stokes equations -- see \cite{CF88} or \cite{R01} for classical references on this material. The `Helmholtz-Leray' $L^2$-projector $P: L^2(\Omega)^2 \to H$ extends to act on all Schwartz distributions. The \textit{Stokes operator} is then $A = -P\Delta$ where $\Delta = \nabla \cdot \nabla$ is the Laplacian. In the case of periodic boundary conditions one in fact has $A = - \Delta$ on its domain $$\mathcal D(A) \equiv H^2(\Omega)^2 \cap V,$$ (see (9.15) in \cite{R01}). The operator $A$ then has `graph norm' $$\|u\|_{\mathcal D(A)} \equiv \|Au\|_{L^2} \simeq \|u\|_{H^2},$$ (see (9.13) in \cite{R01}). Next, using appropriate versions of Sobolev inequalities (p.243 in \cite{R01} or Ch.6 in \cite{CF88}) and recalling (\ref{bee}), we can realise the bilinear form
\begin{equation}\label{B}
B(u,v) = P[(u \cdot \nabla) v], ~\text{ as }~B: V \times V \to V',
\end{equation}
where $V'$ is the topological dual space of $V$ with the usual dual pairing $\langle \cdot, \cdot \rangle_{V,V'}$ arising from the action of the $L^2$-inner product. In this notation we have from integration by parts
\begin{equation} \label{divA}
\|u\|_{V}^2 = \langle \nabla u, \nabla u \rangle_{L^2} = \langle -\Delta u, u\rangle_{V,V'} = \langle Au, u \rangle_{V,V'}, ~u \in V.
\end{equation}
Moreover (as in (6.17), (6.18) in \cite{CF88}, or Proposition 9.1 in \cite{R01}), for $u,v,w \in V$, one has
\begin{equation}\label{divB}
\langle B(u,v), w \rangle_{V,V'} = - \langle B(u,w), v \rangle_{V, V'},~\text{and thus}~\langle B(u,v), v \rangle_{V,V'} =0.
\end{equation}

\smallskip

Now let $\nu >0$ be a fixed viscosity constant and $f  \in V$ a forcing term. To expedite proofs we take $f$ to be time-independent but this is not necessary. We are interested in (spatially) periodic vector fields $u=u_\theta=(u_\theta(t,x): t \in (0,T], x \in \Omega)$ that solve the incompressible Navier-Stokes equations represented by the system of non-linear partial differential equations
\begin{align}\label{nstokp}
\frac{\partial}{\partial t} u - \nu \Delta u + (u \cdot \nabla) u &= f -\nabla p~~~\text{ on } (0,T] \times \Omega,  \\
\nabla \cdot u&= 0 ~~~\text{ on } (0, T] \times \Omega, \notag \\
\int_\Omega u(t, \cdot) &=0 ~~~\text {for all } t \in (0,T], \notag \\
u(0,\cdot) &= \theta ~~~\text{ on } \Omega, \notag
 \end{align}
where $\theta \in V$ is an initial condition and $p$ is a scalar pressure term.

A complete solution theory exists for this PDE in our two-dimensional setting. One can initially consider a type of \textit{weak solution} by taking $L^2$-inner products with $v \in V$ in (\ref{nstokp}). Using (\ref{B}), (\ref{divA}) and that $\int_\Omega \nabla p \cdot v = \int_\Omega p (\nabla \cdot v) =0$ by integration by parts, we see that in order to find such a weak solution, it suffices to find a solution $u \in V$ of the non-linear evolution equation in $V'$ given by
\begin{align} \label{nstokes}
\frac{du}{dt} + \nu A u +B(u,u) &=f \\
u(0)&=\theta. \notag
\end{align}
When this equation holds in the space $H$ (rather than just in $V'$), we speak of a `strong' solution. See Ch.9 in \cite{R01} or Ch.5 in \cite{CF88} for details. For the problem of recovering the initial condition studied below, we shall content ourselves with the information provided by the solutions of the reduced equation (\ref{nstokes}), but it is not difficult to see (from the Helmholtz decomposition theorem) that (\ref{nstokes}) and (\ref{nstokp}) are in fact equivalent equations.

To formulate an existence result for strong solutions of (\ref{nstokes}), consider function spaces $$L^p((0,T], \mathscr X) \equiv \Big\{u: (0,T] \times \Omega \to \R^2: \int_0^T \|u(t,\cdot)\|_\mathscr X^p dt <\infty \Big\},~~1 \le p <\infty,$$ with corresponding Bochner-integral norm for $\mathscr X$-valued maps, where $\mathscr X$ is a normed linear space of vector fields over $\Omega$ to be specified. Similarly we define the spaces $L^\infty((0,T], \mathscr X)$ and $C([0,T], \mathscr X)$ of time-bounded or -continuous $\mathscr X$ valued maps. 

The next result follows from existing theory for the two-dimensional periodic Navier-Stokes equations \cite{CF88, R01}; only the uniform in space-time boundedness of the solutions in (\ref{ubd}) -- which will be important in the statistical proofs to follow -- is not entirely standard and we therefore include a proof in Section \ref{appendix} below.

\begin{proposition}\label{classical}
Let $T>0$ and let $u(0) \in V$ satisfy $\|u(0)\|_{V} \le U$ for some $U>0$.

\smallskip

A) The two-dimensional periodic Navier-Stokes equations (\ref{nstokes}) have a unique strong solution $u \in C([0,T], V) \cap L^2((0,T], \mathcal D(A))$ with $du/dt \in L^2((0,T], H)$. There exists a constant $c_U\equiv c(U, \|f\|_{L^2}, \nu,T )<\infty$ such that
\begin{equation} \label{fwdreg}
\sup_{0 \le t \le T} \|u(t)\|_V + \int_0^T \|u(t)\|_{H^2(\Omega)}^2 dt \leq c_U.
\end{equation}
Moreover, for every $m>0$ there exists $c=c(m, \|f\|_{H^1}, \nu, T)>0$ such that we have
\begin{equation}\label{ubd}
\sup_{u(0) \in \mathcal D(A):~\|Au(0)\|_{L^2} \le m} ~\sup_{0 \le t \le T, x \in \Omega}|u(t,x)| \le c<\infty.
\end{equation}

B) If $v(0) \in V$ is another initial condition, then we have
\begin{equation}\label{fwdstab}
\sup_{0 \le t \le T} \|u(t)-v(t)\|_{L^2(\Omega)} \le K \|u(0) - v(0)\|_{L^2(\Omega)}
\end{equation}
for some constant $K=K(U, \|f\|_{L^2}, \nu, T)<\infty$.
\end{proposition}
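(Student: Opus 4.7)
The plan is to treat the three estimates separately. The basic regularity claim (\ref{fwdreg}) of Part A is classical; the uniform space-time bound (\ref{ubd}) requires one further energy level combined with a two-dimensional Sobolev embedding; and Part B is a standard difference-Gronwall argument based on the 2D Ladyzhenskaya inequality.

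First, for (\ref{fwdreg}), I would invoke the Galerkin scheme based on the eigenfunctions of the Stokes operator $A$, as in \cite{R01} Ch.~9 or \cite{CF88} Ch.~5. Pairing the projected version of (\ref{nstokes}) with $u$ and using the cancellation $\langle B(u,u), u\rangle_{V,V'}=0$ from (\ref{divB}) gives the identity
\begin{equation*}
\tfrac12\frac{d}{dt}\|u\|_H^2 + \nu\|u\|_V^2 = \langle f, u\rangle_{V,V'},
\end{equation*}
from which $\sup_{[0,T]}\|u\|_H^2 + \int_0^T\|u\|_V^2\,dt$ is bounded in terms of $\|u(0)\|_H$, $\|f\|_{V'}$ and $\nu$. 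Pairing instead with $Au$ and using a 2D trilinear estimate such as
\begin{equation*}
|\langle B(u,u), Au\rangle_{V,V'}|\le c\|u\|_H^{1/2}\|u\|_V\|Au\|_{L^2}^{3/2},
\end{equation*}
followed by Young's inequality to absorb the highest-order factor into $\nu\|Au\|_{L^2}^2$ and Gronwall's inequality, yields the $V$- and $L^2_t\mathcal D(A)$-bounds in (\ref{fwdreg}). Passing to the limit via Aubin-Lions compactness produces a strong solution in $C([0,T],V)\cap L^2((0,T),\mathcal D(A))$; uniqueness will follow from the Part B argument below.

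Second, for (\ref{ubd}) I would exploit the stronger hypothesis $\|Au(0)\|_{L^2}\le m$ to gain one further derivative. Pairing (\ref{nstokes}) with $A^2 u$ at the Galerkin level and using the self-adjointness of $A$ gives
\begin{equation*}
\tfrac12\frac{d}{dt}\|Au\|_{L^2}^2 + \nu\|A^{3/2}u\|_{L^2}^2 = -\langle B(u,u), A^2u\rangle + \langle f, A^2u\rangle.
\end{equation*}
Integrating by parts, the forcing term contributes $\langle A^{1/2}f, A^{3/2}u\rangle$ (whence the $\|f\|_{H^1}$ dependence), while a 2D trilinear estimate on $\langle AB(u,u), Au\rangle$ combined with Agmon's inequality $\|u\|_{L^\infty}\le c\|u\|_V^{1/2}\|Au\|_{L^2}^{1/2}$ yields a bound of the form $c\|u\|_V^a\|Au\|_{L^2}^b\|A^{3/2}u\|_{L^2}$, whose highest-order factor is absorbed via Young. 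The resulting differential inequality takes the shape
\begin{equation*}
\frac{d}{dt}\|Au\|_{L^2}^2 \le C\|f\|_{H^1}^2 + C\bigl(1+\|u\|_V^4\bigr)\|Au\|_{L^2}^2,
\end{equation*}
and since the first part already supplies $\int_0^T\|u\|_V^4\,dt<\infty$ (from $\|u\|_V\in L^\infty_t\cap L^2_t$), Gronwall yields $\sup_{[0,T]}\|Au(t)\|_{L^2}\le c(m,\|f\|_{H^1},\nu,T)$. Finally, the 2D Sobolev embedding $H^2(\Omega)\hookrightarrow C(\overline{\Omega})$ converts this into the pointwise bound (\ref{ubd}).

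Third, for Part B, set $w=u-v$; the identity $B(u,u)-B(v,v)=B(w,u)+B(v,w)$ shows that $w$ solves
\begin{equation*}
w_t + \nu Aw + B(w,u) + B(v,w) = 0, \qquad w(0) = u(0)-v(0).
\end{equation*}
Pairing with $w$ in $L^2$, the cancellation $\langle B(v,w),w\rangle_{V,V'}=0$ from (\ref{divB}) together with the 2D Ladyzhenskaya inequality $\|w\|_{L^4}^2\le c\|w\|_{L^2}\|w\|_V$ yields
\begin{equation*}
|\langle B(w,u), w\rangle_{V,V'}|\le \|w\|_{L^4}^2\|u\|_V \le \tfrac{\nu}{2}\|w\|_V^2 + \frac{C}{\nu}\|u\|_V^2\|w\|_{L^2}^2.
\end{equation*}
Gronwall then gives $\|w(t)\|_{L^2}^2\le\|w(0)\|_{L^2}^2\exp\bigl(C\nu^{-1}\int_0^T\|u(s)\|_V^2\,ds\bigr)$, and the exponent is controlled by $c_U$ via Part A, delivering (\ref{fwdstab}). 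The main obstacle in this whole programme is the 2D trilinear estimate for $\langle B(u,u), A^2u\rangle$ used in the proof of (\ref{ubd}): it is sharp enough to close the Gronwall loop only because $\dim\Omega=2$, which is precisely why Proposition \ref{classical} is restricted to the periodic torus in $\mathbb{R}^2$.
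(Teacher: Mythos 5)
Your proposal is correct, and for the two parts the paper simply cites ((\ref{fwdreg}) and Part B) it follows the standard route the authors have in mind: the basic energy identity, the $Au$-pairing with the $2D$ trilinear estimate for (\ref{fwdreg}), and the difference-Gronwall argument with Ladyzhenskaya for (\ref{fwdstab}) (you in fact supply the $Au$-level argument that the paper explicitly omits). The one place where you genuinely diverge is the estimate the paper singles out as ``not entirely standard'', namely the uniform $H^2$-in-time bound behind (\ref{ubd}). You obtain it in velocity variables by pairing with $A^2u$, estimating $\langle B(u,u),A^2u\rangle$ via Agmon/Ladyzhenskaya, and closing Gronwall with the previously established $L^\infty_tV$-bound; this works, though the Young step needs one extra interpolation (e.g.\ $\|Au\|_{L^2}^2\le\|u\|_V\|A^{3/2}u\|_{L^2}$) to tame the cubic term $\|Au\|_{L^2}^3$ coming from the Agmon factor, so the coefficient in your differential inequality comes out as a power of $\|u\|_V$ rather than exactly $1+\|u\|_V^4$. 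The paper instead passes to the $2D$ vorticity $\omega=\nabla^\perp\cdot u$, pairs the vorticity equation with $-\Delta\omega$, and exploits the cancellation $\int_\Omega[(u\cdot\nabla)\partial_l\omega]\partial_l\omega=0$ so that the nonlinear term is bounded by $c_0\|\nabla u\|_{L^2}\|\nabla\omega\|_{L^2}\|\Delta\omega\|_{L^2}$; the resulting Gronwall exponent is $\int_0^T\|\nabla u\|_{L^2}^2\,dt$, which is controlled by the \emph{first} energy estimate alone. So the vorticity route is more economical (it bypasses the intermediate $V$-level bound and the heavier trilinear estimate on $\langle B(u,u),A^2u\rangle$), while your route is more portable to settings where a vorticity formulation is unavailable; both deliver $\sup_{t\le T}\|u(t)\|_{H^2}\le c$ and hence (\ref{ubd}) via $H^2(\Omega)\hookrightarrow L^\infty(\Omega)$.
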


Let us now turn to the inverse problem of solving for $u(0)$ from $u(t)$. It is known that under natural regularity hypotheses, the Navier-Stokes solution map $\theta = u(0) \mapsto u(t)=u_\theta(t)$ is analytic in time, see Theorem 12.2 in \cite{CF88}. Therefore we have `backward uniqueness' -- the solutions $u(t)$ determine their initial conditions $u(0)$. As this result is non-quantitative, it is of little use to reconstruct $u(0)$ from discrete noisy measurements of $u(t)$.  A main contribution of this article is the following explicit stability estimate for this forward map $\theta \mapsto u_\theta(t)$, to be used in the proofs of the statistical theorems below. It is inspired by backward uniqueness results \cite{BT73} for general non-linear parabolic equations in Hilbert space. Our proof implies two separate inequalities, of independent interest: the first is a global stability estimate with a logarithmic inverse modulus of continuity, corresponding to a `severely' ill-posed inverse problem. The second gives a potentially stronger Lipschitz stability estimate which however implicitly still depends on the initial conditions through a non-trivial constant $c_P$ which will be discussed in more detail in Remark \ref{stokspec}.

\begin{theorem}\label{main1}
Let $T>0$, and for initial conditions  $u(0), v(0) \in V$ such that $\|u(0)\|_{V} + \|v(0)\|_{V} \le U<\infty$, consider the corresponding strong solutions $u,v \in C([0,T], V)$, to the $2$-dimensional periodic Navier-Stokes equations (\ref{nstokes}).

\smallskip

A) There exists a constant $c_1$ depending only on $U, \nu, T, \|f\|_{L^2}$ such that $\sup_{0 \le t \le T}\|u(t)-v(t)\|_{L^2}<c_1$ and
\begin{equation}\label{logstab}
\|u(0)-v(0)\|_{L^2(\Omega)} \leq c_1  \Big(\log \frac{c_1}{\|u(t)-v(t)\|_{L^2}} \Big)^{-1/2},~~~\text{for every } t \in [0, T].
\end{equation}

B) Let further $0<c_P<\infty$ be a (`inverse Poincar\'e') constant such that
\begin{equation}\label{invpoinc}
\frac{\|u(0)-v(0)\|_{V}}{\|u(0)-v(0)\|_{L^2}} \le c_P .
\end{equation}
Then there exists a constant $c_2=c_2(U, \nu, T, \|f\|_{L^2})$ such that
\begin{equation}
\|u(0) - v(0)\|_{L^2(\Omega)} \le e^{c_2 c_P} \|u(t) - v(t)\|_{L^2(\Omega)},~~~~~~\text{for every } t \in [0, T].
\end{equation}
\end{theorem}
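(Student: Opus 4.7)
}

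The plan is to follow the classical Bardos--Tartar \cite{BT73} strategy of controlling the Dirichlet quotient of the difference $w(t) := u(t) - v(t)$. Subtracting the two Navier--Stokes systems and using the bilinear identity $B(u,u) - B(v,v) = B(u,w) + B(w,v)$, the error $w$ satisfies
\begin{equation*}
\frac{dw}{dt} + \nu A w + B(u,w) + B(w,v) = 0, \qquad w(0) = u(0) - v(0) \in V.
\end{equation*}
On the set where $w(t) \neq 0$ I introduce
\begin{equation*}
\Lambda(t) := \frac{\|w(t)\|_V^2}{\|w(t)\|_{L^2}^2} = \frac{\langle A w(t), w(t)\rangle_{V,V'}}{\|w(t)\|_{L^2}^2},
\end{equation*}
so that an estimate of the form $\Lambda(t) \le g(\Lambda(0),T)$ combined with a matching lower bound for $(d/dt)\log \|w\|_{L^2}^2$ will yield the desired stability. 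Throughout I will use the a priori regularity from Proposition \ref{classical}, in particular the uniform-in-time $V$-bound and the $L^2((0,T),\mathcal{D}(A))$ bound on $u,v$.

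The first step is to produce a lower bound on the logarithmic derivative. Taking the $L^2$ inner product of the equation with $w$, using the cancellation $\langle B(u,w), w\rangle_{V,V'}=0$ from (\ref{divB}), and the 2D Ladyzhenskaya inequality $\|w\|_{L^4}^2 \le C\|w\|_{L^2}\|w\|_V$, I get
\begin{equation*}
\bigl|\langle B(w,v),w\rangle_{V,V'}\bigr| \le \|w\|_{L^4}^2 \|\nabla v\|_{L^2} \le C\|w\|_{L^2}\|w\|_V\|v\|_V,
\end{equation*}
which after Young's inequality yields
\begin{equation*}
-\frac{d}{dt}\log\|w(t)\|_{L^2}^2 \le C_1 \Lambda(t) + C_2\|v(t)\|_V^2,
\end{equation*}
with $C_1,C_2$ depending only on $\nu$. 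The second step is to differentiate $\Lambda$. Using $\langle Aw - \Lambda w,w\rangle = 0$ one finds that the dissipative contribution is
\begin{equation*}
\frac{-2\nu}{\|w\|_{L^2}^2}\|Aw - \Lambda w\|_{L^2}^2,
\end{equation*}
and a standard computation shows
\begin{equation*}
\Lambda'(t) \le -\frac{2\nu}{\|w\|_{L^2}^2}\|Aw - \Lambda w\|_{L^2}^2 + \frac{2}{\|w\|_{L^2}^2} \bigl\|B(u,w)+B(w,v)\bigr\|_{L^2}\, \|Aw - \Lambda w\|_{L^2}.
\end{equation*}
Using the 2D bound $\|B(\phi,\psi)\|_{L^2} \le C\|\phi\|_V^{1/2}\|A\phi\|_{L^2}^{1/2}\|\psi\|_V$ (and the symmetric one) together with Young's inequality absorbs the quadratic $\|Aw - \Lambda w\|$ factor into the first (negative) term and leaves a remainder controlled by
\begin{equation*}
\Lambda'(t) \le C_3 \bigl(\|Au(t)\|_{L^2}^2 + \|Av(t)\|_{L^2}^2 + \|u(t)\|_V^2 + \|v(t)\|_V^2\bigr)\bigl(\Lambda(t)+1\bigr).
\end{equation*}
Because the prefactor is $L^1(0,T)$ by (\ref{fwdreg}), Gronwall delivers $\Lambda(t) \le C_4(\Lambda(0)+1)$ for all $t\in[0,T]$ with $C_4$ depending only on $U,\nu,\|f\|_{L^2},T$.

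Integrating the log-derivative bound from $0$ to $t$ and inserting this control on $\Lambda$ gives
\begin{equation*}
\log \frac{\|w(0)\|_{L^2}^2}{\|w(t)\|_{L^2}^2} \le C_5\bigl(\Lambda(0)+1\bigr)
\end{equation*}
for a constant $C_5=C_5(U,\nu,T,\|f\|_{L^2})$. Part B is now immediate: under (\ref{invpoinc}) we have $\Lambda(0) \le c_P^2$, and (after adjusting $c_2$ to absorb the square into the exponent, possibly estimating $c_P^2 \le c_P \vee 1$ times $c_P$) we recover $\|w(0)\|_{L^2} \le e^{c_2 c_P}\|w(t)\|_{L^2}$. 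For Part A, $\Lambda(0)$ is not assumed bounded, but the uniform $V$-bound on the solutions gives the trivial estimate $\Lambda(0) \le 4U^2/\|w(0)\|_{L^2}^2$; substituting and solving yields
\begin{equation*}
\|w(0)\|_{L^2}^2 \log \frac{\|w(0)\|_{L^2}^2}{\|w(t)\|_{L^2}^2} \le C_6,
\end{equation*}
from which the logarithmic inverse modulus of continuity (\ref{logstab}) follows by absorbing the bounded factor $\|w(0)\|_{L^2}^2$ inside the $\log$ into the constant $c_1$. The main technical obstacle I expect is step two: making the differential inequality for $\Lambda$ rigorous requires simultaneously (i) selecting the right 2D product estimates for $B$ so that $\|Aw-\Lambda w\|$ can be absorbed by the dissipation rather than generating an unbounded negative contribution, and (ii) justifying the computation at times when $w(t)$ could vanish (which must be ruled out a posteriori from the very bound being proved, a standard continuation-type argument). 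All other ingredients are either supplied by Proposition \ref{classical} or are standard 2D Sobolev manipulations.
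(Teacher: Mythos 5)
Your proposal is correct and follows essentially the same route as the paper: the Bardos--Tartar argument that bounds the Dirichlet quotient of $w=u-v$ via a Gronwall inequality whose source term is the normalised squared $L^2$-norm of the bilinear forcing, combined with a lower bound on $(d/dt)\log\|w\|_{L^2}^2$ in terms of that quotient, with Part A obtained from the trivial bound $\Lambda(0)\lesssim U^2/\|w(0)\|_{L^2}^2$; the only differences are cosmetic (the non-symmetric splitting $B(u,w)+B(w,v)$ versus the paper's $B(\bar u,w)+B(w,\bar u)$ with $\bar u=(u+v)/2$, and where exactly Young's inequality is applied). The one point to watch is that your Part B exponent naturally comes out proportional to $\Lambda(0)\le c_P^2$ rather than to $c_P$, but the paper's own proof has the same feature (it reads $c_P$ as a bound on the squared ratio $\Phi(0)$, consistent with how $c_P$ is used in Remark \ref{stokspec}), so this is a shared notational wrinkle rather than a gap.
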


Versions of these inequalities where $\|u(t) - v(t)\|_{L^2(\Omega)}$ is replaced by its quadratic time average over intervals $[T_0,T]$ hold as well -- see Corollary \ref{usef} below. The proof of Theorem \ref{main1} extends to the case of bounded smooth domains $\Omega \subset \R^2$, if the spaces $V, H$ are appropriately defined (with Dirichlet boundary conditions, cf.~\cite{CF88}), after only minor technical modifications.

The growth of the constants in the preceding stability estimates is exponential in the time horizon $T$, as is not unexpected due do the limited time predictability of chaotic dynamical systems of even much simpler nature than the Navier-Stokes equations. We refer to the discussion in the introduction, the classical contribution by Lorenz \cite{L63}, as well as more generally \cite{K03} about limitations of forecasting geophysical systems. The previous stability estimates hence should be interpreted as informative in `moderate' time horizons $T$.

\begin{remark}[Inverse Poincar\'e inequality and the Stokes spectrum] \label{stokspec} \normalfont
Note that the ratio in (\ref{invpoinc}) is lower bounded by a fixed positive constant in view of the Poincar\'e inequality (p.292 in \cite{E10}), hence the terminology. Moreover, for $u(0)\neq v(0) \in V$, the ratio in (\ref{invpoinc}) is always finite, so a constant $c_P$ always exists in this case.  For the statistical results below we require however \textit{uniform} control of $c_P$ in the parameter space of admissible initial conditions $u(0),v(0)$. While one cannot in general expect that such a uniform `inverse Poincar\'e constant' exists, examples can be given where it holds. For instance suppose both $u(0), v(0) \in V$ have a \textit{finite} expansion in the $H$-orthonormal eigen-basis $(e_j: j \ge 0)$ of the Stokes operator $A$ (see (\ref{stokeef}) below), up to `frequency' $J$. Then
\begin{align*}
\frac{\|u(0)-v(0)\|^2_{V}}{\|u(0)-v(0)\|^2_{L^2}} = \frac{\sum_{j \le J} \lambda_j \langle e_j, u(0)-v(0) \rangle_{L^2}^2}{\sum_{j \le J} \langle e_j, u(0)-v(0) \rangle_{L^2}^2} \leq \lambda_J  \equiv c_P.
\end{align*}
Observe that one has the asymptotic distribution $\lambda_j \simeq j$ as $j \to \infty$, of the eigenvalues of the Stokes operator $A$ in $d=2$, see Proposition 4.14 in \cite{CF88}. This will permit the creation of `Stokes band-limited' models of initial conditions for which we can obtain `fast' (better than logarithmic) convergence rates in Theorem \ref{fastrates} below. Other sets of initial conditions could be conceived for which (\ref{invpoinc}) holds with a uniform constant $c_P$, but this is beyond the scope of the present paper.
\end{remark}
The stability estimate (\ref{logstab}) is sharp in the sense that the inverse modulus of continuity is attained on balls in $H^2$ for particular sets of motions of the Navier-Stokes equation -- at least up to the power of the logarithm. Inspection of the proof of (\ref{invest}) shows that the power of $\log$ in the lower bound could be made to approach $1/2$ if we consider initial conditions bounded in $H^1$ only, but we give a result under the stronger $H^2$-hypothesis relevant in the statistical theorems that follow.

\begin{theorem}\label{stabsharp}
There exists a sequence of initial conditions $u_j(0) \in C^\infty(\Omega)^2 \cap V$, $j \in \mathbb N,$ with corresponding strong solutions $u_j(t)$ to the periodic Navier-Stokes equations (\ref{nstokes}) on $\Omega$ with $\nu=1/2, f=0,$ such that
\begin{equation}\label{invest0}
\|u_j(0)\|_{H^2} \lesssim 1,~~\|u_j(0)\|_{L^2} \simeq j^{-2},~~\|u_j(t)\|_{L^2} \simeq e^{-j^2t}j^{-2},
\end{equation}
for all $t>0$. In particular, setting $v(0)=0$ and hence $v(t) \equiv 0$, we have for some $c'=c'(c,t)>0$ that
\begin{equation}\label{invest}
\|u_j(0)-v(0)\|_{L^2(\Omega)} \ge c' \frac{1}{\log \big(\frac{1}{\|u_j(t) - v(t)\|_{L^2}} \big)},~~\text{all } j \in \mathbb N, t>0.
\end{equation}
\end{theorem}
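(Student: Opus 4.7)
My plan is to exhibit explicit Kolmogorov-type initial data for which the inertial nonlinearity $B(u,u)$ vanishes identically, so that the Navier--Stokes evolution reduces to the heat semigroup acting on a single Laplace eigenmode. Concretely I would set
\[
u_j(0)(x_1,x_2) = j^{-2}\sin(jx_1+jx_2)\,(-1,1), \qquad j \in \mathbb N.
\]
This is smooth and $2\pi$-periodic, has zero divergence ($\partial_{x_1} u_{j,1}(0) + \partial_{x_2} u_{j,2}(0) = -j^{-1}\cos(jx_1+jx_2) + j^{-1}\cos(jx_1+jx_2) = 0$), and has mean zero, so $u_j(0) \in C^\infty(\Omega)^2 \cap V$. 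Writing $k=(j,j)$, $k^\perp=(-j,j)$, the field $u_j(0)$ is proportional to $\sin(k\cdot x)\,k^\perp$, and the Kolmogorov identity
\[
(u_j(0)\cdot\nabla)u_j(0) \;\propto\; \sin(k\cdot x)\,(k^\perp\cdot \nabla)\bigl[\sin(k\cdot x)\,k^\perp\bigr] \;=\; \sin(k\cdot x)\cos(k\cdot x)\,(k\cdot k^\perp)\,k^\perp \;=\; 0
\]
holds since $k\perp k^\perp$. Hence $B(u_j(0),u_j(0))=0$, and the same identity persists for any scalar multiple of $u_j(0)$.

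Next I would verify the quantitative bounds and reduce to a scalar ODE. Direct integration on the torus gives $\|u_j(0)\|_{L^2}^2 = 2 j^{-4}\int_\Omega \sin^2(jx_1+jx_2)\,dx = 4\pi^2 j^{-4}$, i.e., $\|u_j(0)\|_{L^2}\simeq j^{-2}$. Since $-\Delta \sin(k\cdot x) = |k|^2\sin(k\cdot x) = 2j^2\sin(k\cdot x)$, one has $Au_j(0)=2\sin(jx_1+jx_2)(-1,1)$ with $\|Au_j(0)\|_{L^2}\lesssim 1$, hence $\|u_j(0)\|_{H^2}\simeq \|Au_j(0)\|_{L^2}\lesssim 1$. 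I would then substitute the ansatz $u_j(t,x) = \alpha(t)\sin(jx_1+jx_2)(-1,1)$ into (\ref{nstokes}) with $\nu=1/2$ and $f=0$: the bilinear term vanishes by the Kolmogorov identity above, so the equation collapses to the scalar ODE $\alpha'(t) = -\nu\cdot 2j^2\,\alpha(t) = -j^2\,\alpha(t)$ with $\alpha(0)=j^{-2}$, giving $\alpha(t) = j^{-2} e^{-j^2 t}$. The resulting $u_j$ lies in $C([0,T],V)\cap L^2((0,T],\mathcal D(A))$ with $du_j/dt \in L^2((0,T],H)$, and by the uniqueness statement in Proposition \ref{classical} it must coincide with the Navier--Stokes evolution of the initial datum $u_j(0)$. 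Therefore $\|u_j(t)\|_{L^2} = 2\pi j^{-2} e^{-j^2 t}$, establishing (\ref{invest0}).

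To deduce (\ref{invest}), set $v(0)=0$, so $v(t)\equiv 0$ is the unique strong solution. Taking logarithms,
\[
\log\bigl(1/\|u_j(t)-v(t)\|_{L^2}\bigr) \le j^2 t + 2\log j + C,
\]
which is $\le C_t\, j^2$ for $j$ sufficiently large (depending on $t$). Combined with $\|u_j(0)-v(0)\|_{L^2}\ge c_0 j^{-2}$, this yields
\[
\|u_j(0)-v(0)\|_{L^2} \;\ge\; c_0 j^{-2} \;\ge\; \frac{c_0}{C_t}\cdot\frac{1}{\log(1/\|u_j(t)-v(t)\|_{L^2})},
\]
which is (\ref{invest}) with a constant $c'=c'(c,t)>0$. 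No step is analytically delicate; the only genuine content is the Kolmogorov observation that a single diagonal sinusoidal mode on the torus is a steady nonlinear solution (modulo dissipation), which turns the problem into an explicit scalar ODE along one Fourier mode and leaves only elementary bookkeeping of logarithms.
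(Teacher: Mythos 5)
Your proposal is correct and is essentially the paper's own argument: both plant a single diagonal Fourier mode (the paper uses $j^{-2}\cos(j(x_1-x_2))(1,1)$, you use $j^{-2}\sin(j(x_1+x_2))(-1,1)$) for which the nonlinearity $(u\cdot\nabla)u$ vanishes identically, so the flow reduces to the heat semigroup on one eigenmode and the norms and logarithmic lower bound follow by direct computation. The uniqueness step invoking Proposition \ref{classical} to identify the explicit ansatz with the Navier--Stokes solution is the same in both treatments.
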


The exponential instability arises from a (linear, scalar) heat equation that can be `planted' within the set of solutions of Navier-Stokes equations for a specific set of initial conditions for which the non-linearity vanishes at all times. As long as $\nu>0$ and $f=0$ we conjecture that similar phenomena persist also with non-vanishing non-linear term by employing the spectral manifolds constructed in \cite{FS84} instead of the eigenfunctions of the Laplacian from the previous proof. In these examples, most of the information in the data is `at the beginning of time', reflecting the action of a smoothing semigroup generated by the Laplacian scaled by a non-vanishing viscosity $\nu$.

\subsection{Non-linear data assimilation with Gaussian process priors}\label{datsim}

We now consider data assimilation tasks arising with discrete observations from the Navier-Stokes equations and give statistical guarantees for Bayesian methodology proposed and developed for such problems in \cite{CDRS09} -- see also \cite{S10, RC15, LSZ15, EVvL22}. For $u=u_\theta$ a (strong) solution of the PDE (\ref{nstokes}) with unknown initial condition $u(0)=\theta \in V$, the statistical observations are assumed to consist of the random vectors $Z^{(N)}=(Y_i, t_i, X_i)_{i=1}^N$
\begin{equation}\label{model}
Y_{i} = u_\theta(t_i, X_i) + \varepsilon_{i},~~\varepsilon_{i} \sim^{i.i.d.} N(0,I_{\R^2}),~~i=1, \dots, N,
\end{equation}
with $(t_i, X_i)_{i=1}^N$ drawn iid from the uniform distribution $\lambda$ on $(T_0,T] \times \Omega$, independently of the Gaussian noise vectors $\varepsilon_i$. Here $(T_0,T]$ is the time horizon where measurements are taken and the (fixed) constants $T_0, T$ are such that $0 \le T_0 < T$. We include by convention the case $0<T_0=T$ of a single temporal measurement $(T_0, T]\equiv\{T\}$ is; in this case $\lambda = \delta_{T} \otimes \lambda_\Omega$ where $\delta_T$ is Dirac measure at $\{T\}$ and $\lambda_\Omega$ the uniform distribution on $\Omega$. The law on $(\mathbb R^2 \times (T_0,T] \times \Omega)^N$ of the data vector $Z^{(N)}=(Y_i, t_i, X_i)_{i=1}^N$ when $u_\theta$ arises from the initial condition $\theta$ will be denoted by $P_\theta^N$. One could  (as in (\ref{discretem})) consider distinct sample sizes for time and space measurements but we abstain from this as it facilitates the exposition. Let us emphasise that the `white' noise $\varepsilon_i$ in (\ref{model}) is purely of measurement error type (arising from the discretisation of $u_\theta$) and that we do not model explicit stochasticity in the Navier-Stokes dynamics itself, in particular we do not consider a SPDE model.

\smallskip

We assume the viscosity $\nu>0$ and forcing $f \in V$ to be known, possibly determined beforehand by independent experiments (theory for these distinct inverse problems could be developed as well, for $\nu$ for instance following ideas in \cite{GN20, N22a}). In contrast, and following common practice in data assimilation, we assume that the initial condition $\theta=u(0)$ of the system is \textit{unknown}. Inferences on the state $u_\theta(t,\cdot)$ of the system based on observations $Z^{(N)}$ need to incorporate this uncertainty. One systematic way to do this is to adopt a Bayesian approach and to model the initial condition $u(0)=\theta$ by a Gaussian random field over $\Omega$ -- see \cite{CDRS09} in the setting of Navier-Stokes equations and fluid mechanics specifically; and also \cite{S10, RC15, LSZ15, EVvL22} more generally. For the theory that follows we will employ the following assumption -- for standard notions of and background on Gaussian processes we refer the reader to \cite{GN16} and \cite{N23}.

\begin{condition}\label{gprior}
Consider a Borel probability measure $\Pi'$ on $V \cap H^2(\Omega)^2$ arising as the law of the centred Gaussian random vector field $(\theta'(x)=(\theta'_1(x), \theta'_2(x)): x \in \Omega)$ with reproducing kernel Hilbert space (RKHS) $\mathcal H$ continuously imbedded into $V \cap H^\alpha(\Omega)^2$ for some $\alpha \ge 2$. Then take as prior $\Pi=\Pi_N$ for $\theta$ the law of the rescaled random vector field $\theta = \theta'/N^{1/(2\alpha+2)}$.
\end{condition}

Examples of `base priors' $\Pi'$ with $\mathcal H = (V \cap H^\alpha(\Omega)^2, \|\cdot\|_{H^\alpha(\Omega)^2})$ for any $\alpha \ge 2$ can be easily constructed: for instance we could define the prior for $\theta'$ immediately as a Gaussian series expansion (e.g., (B.1) in \cite{N23}) for the (real parts of the) $H$-orthonormal eigenfunctions
\begin{equation}\label{stokeef}
e_k \propto (k_2,-k_1) e^{ik.(\cdot)},~~k \in \mathbb Z^2 \setminus \{(0,0)\},
\end{equation}
of the Stokes operator. Alternatively one starts with two independent periodic $\alpha$-regular Gaussian random fields over $\Omega$ (e.g., expanded in a basis of periodic wavelets with independent Gaussian coefficients, as in \cite{NR20}) and then applies the (linear) Helmholtz-Leray-projector $P$ to the Gaussian vector to enforce the constraint that the prior concentrates on the space $H$.  The $N$-dependent rescaling of $\theta'$ follows ideas in \cite{MNP21} and provides an (in the proofs essential) increase of the amount of regularisation provided by the prior. It corresponds to common choices of `tuning parameters' for penalised least squares (or MAP) estimators, see Remark \ref{map} below for more discussion. If information on $\theta$ is available from past observations (`training samples'), we could centre the prior at such a `trained' mean vector, but for the theory we only consider generic mean \textit{zero} Gaussian process priors.

\smallskip

As the random vector fields $\theta', \theta$ lie almost surely in the space $V \cap H^2 = \mathcal D(A)$, by Proposition \ref{classical} such a prior postulates a complete stochastic `forward' model of uniformly bounded solutions $u_\theta(t, \cdot)$ of the Navier-Stokes equations (\ref{nstokes}) drawn from the Gaussian initial condition $\theta$. Given observations $Z^{(N)}$ we can update this model via Bayes' rule to produce the best `posterior' fore- and hind-casts for the solution $u_\theta(x, t)$ at (potentially unobserved) times $t$ and points $x \in \Omega$.  Algorithmically we first compute the posterior distribution for the initial state $\theta$, which in the model (\ref{model}) is of the form
\begin{equation}\label{post}
d\Pi(\theta|Z^{(N)}) \propto e^{\ell_N(\theta)} d\Pi(\theta);~~\ell_N(\theta) = -\frac{1}{2} \sum_{i=1}^N |Y_i - u_\theta(X_i, t_i)|^2, ~ \theta \in V,
\end{equation}
where $|\cdot|=|\cdot|_{\R^2}$ is the Euclidean norm. Even though the prior is Gaussian, the non-linearity of the map $\theta \to u_\theta$ renders $\Pi(\cdot|Z^{(N)})$ a non-Gaussian random probability measure in the function space $V$. Nevertheless,
posterior draws $\theta \sim \Pi(\cdot|Z^{(N)})$ and then also an estimate for the posterior mean $E^\Pi[\theta| Z^{(N)}]$ can be calculated from Markov chain Monte Carlo (MCMC) techniques, for instance by the pCN, ULA or MALA algorithm (see \cite{CRSW13} and specifically in the context of data assimilation also Ch.3 in \cite{LSZ15}; as well as \cite{HSV14, NW20, N23} for results towards computational guarantees). This approach requires numerical solutions of the forward PDE at each iterate $\vartheta_k$ of the Markov chain -- but no inversion step, or backward solution of the PDE, is required. We can then compute an estimate $u_{\bar \theta}$ for $u_\theta(x, t)$ at any given point $(x,t)$ by computing the solution of (\ref{nstokes}) with the initial condition $\bar \theta =E^\Pi[\theta| Z^{(N)}]$ (itself approximated by ergodic MCMC averages $\sum_{k=1}^K \vartheta_k/K$ on a suitable discretisation space for $\theta$). A related optimisation based approach to compute `MAP estimates' is discussed in Remark \ref{map} below.

\smallskip

The absence of an explicit inversion step in this algorithm is attractive for applications but also triggers the question whether guarantees can be given that it will recover the true physical state of the system. Our goal here is to prove that this method indeed will be statistically \textit{consistent}, that is, that it will recover the `correct' solution of the Navier-Stokes equations at any point in time and space, arising from the `ground truth' initial condition $\theta_0$ that has actually generated the data, at least if we take sufficiently many measurements $N \to \infty$. Mathematically this means that we study the statistical behaviour of the posterior distribution under the law $P_{\theta_0}^N$, following the usual paradigm of frequentist analysis of Bayes procedures, see \cite{GV17} or also Ch.7.3 in \cite{GN16}. In our proofs we combine Proposition \ref{classical} and Theorem \ref{main1} with recent techniques from the theory of Bayesian non-linear inversion with Gaussian process priors \cite{MNP21, N23} to show asymptotic concentration properties of this posterior around the true states $(u_{\theta_0}(t,x), t \ge0, x \in \Omega)$ of the non-linear dynamical system.

\smallskip

By convention we regard the first $N$ measurements as the training sample and the additional pair $(X_{N+1}, t)$ as the prediction sample, where $X_{N+1}$ is drawn at random from $\lambda_\Omega$ and $t \in (0,T_p], T_p \ge T,$ is a (deterministic) time we wish to fore- or hind-cast. If we denote by $\theta_0$ the ground truth initial condition that generated the data (\ref{model}) and by $\theta \sim \Pi(\cdot|Z^{(N)})$ a draw from the posterior distribution, then this leads us to consider the posterior quadratic `prediction risk'
\begin{equation}\label{fwdt}
E_{X_{N+1}}\Big(\big|u_\theta(t, X_{N+1}) - u_{\theta_0}(t, X_{N+1})\big|^2\big) = \|u_\theta(t, \cdot) - u_{\theta_0}(t, \cdot)\|_{L^2(\Omega)^2}^2,~t>0,
\end{equation}
which measures how well we predict on average the state of the system at time $t$ and at a `generic' position $X_{N+1} \sim \lambda_\Omega$.

\smallskip

\begin{theorem}\label{mainstat}
Consider a Gaussian process prior as in Condition \ref{gprior} with $\alpha \ge 2$, RKHS $\mathcal H$, and resulting posterior distribution (\ref{post}) arising from observations (\ref{model}) in the $2$-dimensional periodic Navier-Stokes equations (\ref{nstokes}) with either $0 \le T_0 < T$ or $T_0=T>0$. Suppose the ground truth initial condition $\theta_0$ lies in $\mathcal H$. Then for every $T_P \ge T$ there exists a sequence $\eta_N = O(1/\sqrt {\log N})$ as $N \to \infty$ (with constants uniform in $\|\theta_0\|_\mathcal H \le U$) such that
\begin{equation}\label{darat}
\Pi\Big(\theta \in V: \sup_{0 < t \le T_p}\|u_\theta(t, \cdot) - u_{\theta_0}(t, \cdot)\|_{L^2(\Omega)^2}< \eta_N|Z^{(N)} \Big) \to^{P_{\theta_0}^N} 1
\end{equation}
as well as
\begin{equation}\label{invrat}
\Pi\big(\theta \in V:  \|\theta - \theta_0\|_{L^2(\Omega)^2} < \eta_N|Z^{(N)}\big) \to^{P_{\theta_0}^N} 1.
\end{equation}
Moreover, if $$\bar \theta_N = E^\Pi[\theta|Z^{(N)}]\in V$$ is the posterior (`Bochner-') mean and $u_{\bar \theta_N}$ the solution of the Navier-Stokes equation (\ref{nstokes}) with initial condition $\bar \theta_N$, then
\begin{equation}
\|\bar \theta_N - \theta_0\|_{L^2(\Omega)^2} + \sup_{0 < t \le T_p}\|u_{\bar \theta_N}(t, \cdot) - u_{\theta_0}(t, \cdot)\|_{L^2(\Omega)^2} =O_{P^N_{\theta_0}}(\eta_N).
\end{equation}
\end{theorem}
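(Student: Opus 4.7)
I would follow the standard two-step Bayesian non-linear inverse problem scheme (as in \cite{MNP21,N23}): first establish posterior concentration at the level of the ``regression function'' $\theta \mapsto u_\theta$ in the time-space $L^2(\lambda)$ norm induced by the design $\lambda$ at some polynomial rate $\xi_N = O(N^{-\alpha/(2\alpha+2)})$, then invoke the logarithmic backward stability estimate (Theorem \ref{main1}A) to convert this into contraction of $\theta$ around $\theta_0$ at the logarithmic rate $\eta_N = O(1/\sqrt{\log N})$, and finally propagate forward via (\ref{fwdstab}) to obtain contraction of $u_\theta(t,\cdot)$ uniformly in $t \in (0, T_p]$.

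\textbf{Step 1: Regression-level contraction.} I would verify the standard hypotheses for Gaussian process posterior contraction in non-linear inverse problems (Theorem 1.3.2 in \cite{N23} or its analogue). The $N^{1/(2\alpha+2)}$ rescaling of $\theta'$ in Condition \ref{gprior} is calibrated so that Borell's small-ball inequality, combined with $\theta_0 \in \mathcal H$, produces a concentration-function bound of order $N \xi_N^2$. The forward Lipschitz estimate (\ref{fwdstab}) transfers a small $L^2(\Omega)$ ball around $\theta_0$ into a small ball of $u_\theta$ around $u_{\theta_0}$ in $C([0,T]; L^2(\Omega))$, and hence in $L^2(\lambda)$, supplying the prior small-ball probability. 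For the sieve I would take $\Theta_N = \{\theta \in V \cap H^\alpha(\Omega)^2 : \|\theta\|_{H^\alpha} \le M_N\}$ with $M_N$ growing slowly, using Borell-Sudakov-Tsirelson to bound $\Pi(\Theta_N^c)$. The uniform boundedness estimate (\ref{ubd}) (since $H^\alpha \hookrightarrow \mathcal D(A)$ for $\alpha \ge 2$) together with (\ref{fwdstab}) gives uniformly bounded, Lipschitz forward maps on $\Theta_N$, yielding the metric entropy bounds needed to construct the testing sequences. The general theorem then delivers
\[
\Pi\bigl(\theta \in V : \|u_\theta - u_{\theta_0}\|_{L^2(\lambda)} \ge L \xi_N \,\big|\, Z^{(N)}\bigr) \to^{P_{\theta_0}^N} 0
\]
for some large $L$, together with the analogous statement restricted to $\Theta_N$ with posterior mass $1 - o_{P_{\theta_0}^N}(1)$.

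\textbf{Step 2: Backward transfer and forward propagation.} If $T_0 = T$, then $\|u_\theta - u_{\theta_0}\|_{L^2(\lambda)}^2$ equals $\|u_\theta(T,\cdot) - u_{\theta_0}(T,\cdot)\|_{L^2(\Omega)}^2$ and Theorem \ref{main1}A applies directly. If $T_0 < T$, I would use a pigeonhole argument (or the averaged variant mentioned as Corollary \ref{usef}) to extract some $t^* \in (T_0, T]$ with $\|u_\theta(t^*,\cdot) - u_{\theta_0}(t^*,\cdot)\|_{L^2(\Omega)} \lesssim \|u_\theta - u_{\theta_0}\|_{L^2(\lambda)}/\sqrt{T - T_0}$, and again apply Theorem \ref{main1}A at $t^*$. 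The uniform-in-$V$ hypothesis of that theorem is met on $\Theta_N$ (for $N$ large enough, $M_N$ growing slowly enough that $\|\theta\|_V \le U$ on the sieve combined with the fixed $\theta_0$). The logarithmic inversion then converts the rate $\xi_N$ to
\[
\|\theta - \theta_0\|_{L^2(\Omega)^2} \lesssim \Bigl(\log \tfrac{c_1}{L \xi_N}\Bigr)^{-1/2} = O(1/\sqrt{\log N}) \equiv \eta_N,
\]
proving (\ref{invrat}). Applying (\ref{fwdstab}) on $[0, T_p]$, again with constant uniform over $\Theta_N$, yields (\ref{darat}).

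\textbf{Step 3: Posterior mean.} For $\bar\theta_N = E^\Pi[\theta | Z^{(N)}]$, I would write $\|\bar\theta_N - \theta_0\|_{L^2} \le E^\Pi[\|\theta - \theta_0\|_{L^2}\, \mathbf 1_{\Theta_N} | Z^{(N)}] + E^\Pi[\|\theta - \theta_0\|_{L^2}\, \mathbf 1_{\Theta_N^c} | Z^{(N)}]$ by Jensen's inequality on the Bochner integral, control the first term by splitting at the contraction radius $\eta_N$ and using that $\|\theta\|_{L^2}$ is bounded by $M_N$ on $\Theta_N$, and handle the tail using the prior mass bound for $\Theta_N^c$ together with standard evidence-lower-bound arguments to compare the posterior and prior probabilities of $\Theta_N^c$. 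The forward Lipschitz estimate then transfers this to the $u_{\bar\theta_N}$ statement.

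\textbf{Main obstacle.} The delicate point is Step 1: the interaction between the $N$-dependent rescaling of the prior, the slowly growing sieve radius $M_N$, and the fact that the constants $c_U, K$ in Proposition \ref{classical} and $c_1$ in Theorem \ref{main1}A depend on $U = \sup_{\Theta_N} \|\theta\|_V$. One must choose $M_N$ growing slowly enough that these constants remain controlled in the testing/entropy arguments, yet fast enough that the prior puts near-full mass on $\Theta_N$ and that $\theta_0 \in \Theta_N$ eventually. A secondary subtlety is the time-averaged variant of Theorem \ref{main1}A needed in the case $T_0 < T$; a clean pigeonhole version should suffice, but a sharper averaged estimate (Corollary \ref{usef}) may yield tighter constants.
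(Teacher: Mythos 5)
Your overall architecture is exactly the paper's: invoke the general Gaussian-process contraction theory of \cite{N23} (Theorems 1.3.2/2.2.2 there) to get the polynomial rate $\delta_N=N^{-\alpha/(2\alpha+2)}$ for $\|u_\theta-u_{\theta_0}\|_{L^2(\mathcal X,\lambda)}$, then apply the stability estimate of Theorem \ref{main1}A (in its time-averaged form, Corollary \ref{usef}, when $T_0<T$) to obtain the logarithmic rate for $\|\theta-\theta_0\|_{L^2}$, propagate forward with (\ref{fwdstab}), and handle the posterior mean by uniform integrability. Your pigeonhole extraction of a good time $t^*$ is a workable substitute for the averaged inequality (\ref{usefullog}) that the paper actually uses.

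There is, however, one step as written that would fail: the sieve $\Theta_N=\{\theta\in V\cap H^\alpha:\|\theta\|_{H^\alpha}\le M_N\}$. Under Condition \ref{gprior} the prior is supported on $V\cap H^2$ while the RKHS is only \emph{imbedded} in $H^\alpha$; for $\alpha>2$ the sample paths of $\theta'$ are generically \emph{not} in $H^\alpha$ (a Gaussian process with $H^\alpha$ RKHS typically has paths of smoothness strictly below $\alpha$), so $\Pi(\Theta_N)=0$ and Borell--Sudakov--Tsirelson gives you nothing. The correct sieve — and the one built into Theorem 2.2.2 of \cite{N23}, which the paper invokes with regularisation norm $\|\cdot\|_{\mathcal R}=\|\cdot\|_{H^2}$ and $\kappa=0$ — is a \emph{fixed} ball $\{\|\theta\|_{H^2}\le M\}$, obtained from the decomposition $\theta=\theta_1+\theta_2$ with $\theta_2$ in a large RKHS ball and $\theta_1$ small, exploiting precisely the $N^{1/(2\alpha+2)}$ rescaling of the prior. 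This also dissolves your stated ``main obstacle'': because the posterior concentrates on a fixed $H^2$-ball (see the event in (\ref{fwdrate})), the constants $c_U$, $K$, $c_1$ of Proposition \ref{classical} and Theorem \ref{main1} are controlled by a single fixed $U$, and no delicate tuning of a growing radius $M_N$ against these constants is needed. With the sieve corrected, the rest of your argument goes through as in the paper.
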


The logarithmic rates obtained may not be sharp at `observed' times $t>0$. For instance in `average prediction' loss where one takes the quadratic time average of (\ref{fwdt}) over $[T_0,T]$, our proofs imply much faster rates, see (\ref{fwdrate}). But inference for fixed possibly unobserved times $t$ constitutes a nonlinear inverse problem. Our proofs show that the Bayesian posterior distribution arising from a prior on the initial condition $\theta$ solves this problem `implicitly' by first inferring $\theta$ and then updating the resulting forward model prediction for $u_\theta(t)$. To avoid the inversion step one may be tempted to construct a prior for $u(t)$ at observed times only without specifying an initial condition, but due to the non-linear constraint on the `regression' functions $u(t)$ to satisfy the Navier-Stokes equations at $t>0$, this is not straightforward. In contrast, a prior on $\theta$ and its non-linear push-forward $u_\theta(t)$ can be obtained from a draw of a Gaussian process and a solution of the associated PDE with Gaussian initial values. 

We now show that for recovery of the initial condition, the posterior convergence rates obtained here are essentially optimal in an information theoretic `minimax' sense in the family of two-dimensional periodic Navier-Stokes equations arising from $H^2$-initial conditions.

\begin{theorem}\label{minmax}
Consider periodic solutions of the Navier-Stokes equations (\ref{nstokes}) with viscosity $\nu=1/2$, forcing $f=0$, and observations $Z^{(N)}$ arising as in (\ref{model}) with either $0<T_0 < T$ or $T_0=T>0$. Then there exists $c=c(U, T)>0$ such that
\begin{equation}
\liminf_{N \to \infty} ~\inf_{\tilde \theta_N}~ \sup_{\theta \in V: \|\theta\|_{H^2} \le U}  P^N_\theta \Big(\|\tilde \theta_N - \theta\|_{L^2} > \frac{c}{\log N}\Big) >1/4,
\end{equation}
where the infimum extends over all estimators $\tilde \theta_N$ of $\theta$ (i.e., all measurable functions of $Z^{(N)}$ taking values in the space $V$).
\end{theorem}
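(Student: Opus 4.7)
The strategy is a standard Le~Cam two-point lower bound: exhibit two initial conditions in $\{\|\theta\|_{H^2}\le U\}$ whose induced data laws have bounded Kullback--Leibler divergence while being separated by an amount of order $1/\log N$ in $L^2$. Theorem~\ref{stabsharp} is precisely the instrument that makes this possible, because it furnishes a sequence of $H^2$-bounded initial conditions whose forward orbits decay like $e^{-j^2 t}j^{-2}$, so the signal left in the data after time $T_0>0$ shrinks exponentially in $j^2$.

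Concretely, I would set $\theta^{(0)}=0$ (which trivially solves \eqref{nstokes} with $f=0$ and satisfies $\|\theta^{(0)}\|_{H^2}\le U$), and for the alternative pick $\theta^{(1)}_j=\rho\,u_j(0)$, where $(u_j)$ is the sequence from Theorem~\ref{stabsharp} and $\rho\in(0,1]$ is chosen small enough (depending on $U$) so that $\|\theta^{(1)}_j\|_{H^2}\le U$ for all $j$, which is feasible by \eqref{invest0}. According to the discussion after Theorem~\ref{stabsharp}, the $u_j$ live inside a subspace on which the nonlinearity vanishes, i.e.\ $B(u_j,u_j)\equiv 0$. Bilinearity of $B$ then gives $B(\rho u_j,\rho u_j)\equiv 0$, and the scalar linearity of the induced heat equation combined with strong-solution uniqueness from Proposition~\ref{classical}(A) identifies $u_{\theta^{(1)}_j}(t)=\rho u_j(t)$. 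Consequently \eqref{invest0} yields
\[
\|\theta^{(1)}_j-\theta^{(0)}\|_{L^2}\asymp \rho\,j^{-2}, \qquad \|u_{\theta^{(1)}_j}(t)-u_{\theta^{(0)}}(t)\|_{L^2}^2 \asymp \rho^2\,e^{-2 j^2 t}j^{-4}.
\]

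Next I would control the Kullback--Leibler divergence of the product data laws. The Gaussian noise and iid design yield
\[
KL\bigl(P^N_{\theta^{(0)}}\big\|P^N_{\theta^{(1)}_j}\bigr) = \frac{N}{2}\int_{(T_0,T]\times\Omega}\bigl|u_{\theta^{(0)}}(t,x)-u_{\theta^{(1)}_j}(t,x)\bigr|^2\,d\lambda(t,x),
\]
which in the interval regime $0<T_0<T$ is, up to constants depending on $T,T_0,\rho,|\Omega|$, bounded by $N\int_{T_0}^T e^{-2j^2 t}j^{-4}\,dt\lesssim N\,j^{-6}e^{-2j^2 T_0}$, and in the single-time regime $T_0=T$ by $N\,j^{-4}e^{-2j^2 T}$. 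Choosing $j=j_N^{\ast}$ as the least integer with $2(j_N^{\ast})^2 T_0\ge\log N$ (respectively $2(j_N^{\ast})^2 T\ge\log N$), both bounds become $\le \alpha$ for some fixed $\alpha<1/2$ and all $N$ large enough, while simultaneously $\|\theta^{(1)}_{j_N^{\ast}}-\theta^{(0)}\|_{L^2}\ge 2c/\log N$ for a constant $c=c(U,T,T_0)>0$.

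The Le~Cam two-point inequality combined with Pinsker's inequality (e.g.\ Theorem~2.2 in Tsybakov, \emph{Introduction to Nonparametric Estimation}, 2009) then gives
\[
\inf_{\tilde\theta_N}\max_{k\in\{0,1\}}P^N_{\theta^{(k)}}\Big(\|\tilde\theta_N-\theta^{(k)}\|_{L^2}>c/\log N\Big)\ge \tfrac{1}{2}\bigl(1-\sqrt{\alpha/2}\bigr)>\tfrac{1}{4}
\]
for all $N$ large enough, and the supremum over $\{\|\theta\|_{H^2}\le U\}$ dominates the two-point maximum on the left. The only mildly delicate step is confirming that the scaled family $\rho u_j$ genuinely belongs to the Navier--Stokes flow, so that the sharp decay \eqref{invest0} transfers verbatim to the alternative hypothesis; once this is verified via the vanishing-nonlinearity subspace from Theorem~\ref{stabsharp}, the calibration $j_N^{\ast}\asymp\sqrt{\log N}$ balancing $KL=O(1)$ against the required $L^2$-separation is entirely routine.
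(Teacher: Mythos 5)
Your proposal is correct and follows essentially the same route as the paper: a two-point reduction with null hypothesis $\theta^{(0)}=0$ and alternative built from the unstable sequence $u_j(0)$ of Theorem \ref{stabsharp}, the exact Kullback--Leibler identity $KL=\tfrac{N}{2}\|u_\theta-u_{\theta_0}\|^2_{L^2(\mathcal X,\lambda)}$, and the calibration $j_N\asymp\sqrt{\log N}$ balancing $KL=O(1)$ against $L^2$-separation of order $1/\log N$ (the paper invokes Theorem 6.3.2 of \cite{GN16} where you invoke Le Cam--Pinsker, which is the same mechanism). Your extra care in rescaling by $\rho$ to enforce $\|\theta^{(1)}_j\|_{H^2}\le U$, justified via the vanishing nonlinearity and linearity of the resulting heat flow, is a correct and slightly more explicit treatment of a point the paper leaves implicit.
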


The proof uses Theorem \ref{stabsharp} and is reminiscent of similar `logarithmic' minimax rates in the \textit{linear} inverse problem of recovering the initial condition from an observed solution of the scalar heat equation (see, e.g., \cite{KVV17, R13}). As with the much simpler case of such heat equations, one can ask if faster rates can be obtained for `super-smooth' initial conditions. If we use a Gaussian prior that has a slowly growing expansion in the eigenfunctions from (\ref{stokeef}), and if the true initial condition $\theta_0$ is `band-limited' in the Stokes spectrum, then we can indeed obtain convergence rates that approach the `parametric' rate $1/\sqrt N$ of finite-dimensional models as we increase the regularity of the prior, $\alpha \to \infty$.

\begin{theorem}\label{fastrates}
Denote by $(e_j: j \in \mathbb N) \subset V$ an enumeration of the $L^2$-orthonormal basis of $H$ arising from the eigenfunctions of the Stokes operator $A$ from (\ref{stokeef}), ordered by increasing eigenvalues. Let the prior $\Pi$ be as in Condition \ref{gprior} and project it onto the span $E_J=\{e_j: j \le J\}$ with $J = J_N = O(\log \log N)$. Suppose the ground truth initial condition $\theta_0$ lies in $\mathcal H \cap E_{J_0}$ for some arbitrary fixed $J_0 \in \mathbb N$. Then the conclusions of Theorem \ref{mainstat} remain true with convergence rate $$\eta_N = (\log N)^\beta \times N^{-\alpha/(2\alpha+2)},~\text{ some }~\beta>0.$$
\end{theorem}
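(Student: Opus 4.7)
The approach adapts the proof of Theorem \ref{mainstat}, substituting the Lipschitz inverse stability of Theorem \ref{main1}B for the logarithmic estimate of Theorem \ref{main1}A. The decisive observation is that the projected prior is supported on $E_{J_N}$, and for all $N$ with $J_N \ge J_0$ we also have $\theta_0 \in E_{J_0} \subseteq E_{J_N}$; thus every posterior draw $\theta$ and the truth $\theta_0$ both lie in $E_{J_N}$. By the calculation in Remark \ref{stokspec} combined with the Weyl asymptotics $\lambda_j \simeq j$ for the $2D$ Stokes operator (Proposition 4.14 in \cite{CF88}), the inverse Poincar\'e constant satisfies
\begin{equation*}
c_P \le \lambda_{J_N} \lesssim J_N = O(\log \log N),
\end{equation*}
so the Lipschitz constant $e^{c_2 c_P}$ in Theorem \ref{main1}B is at most $(\log N)^{\beta_2}$ for some $\beta_2 > 0$.

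The second ingredient is to apply the general Bayesian contraction theory for non-linear inverse problems with Gaussian priors (\cite{MNP21, N23}) to obtain the forward rate
\begin{equation*}
\Pi\bigl(\|u_\theta - u_{\theta_0}\|_{L^2(\lambda)} \le \tilde\eta_N \bigm| Z^{(N)}\bigr) \to^{P_{\theta_0}^N} 1, \quad \tilde\eta_N \simeq (\log N)^{\beta_1} N^{-\alpha/(2\alpha+2)},
\end{equation*}
which is the standard rate for an $\alpha$-regular Gaussian prior in a two-dimensional regression problem with a Lipschitz forward map. The three standard inputs to verify are: (i) a small-ball prior mass bound $\Pi(\|\theta - \theta_0\|_{L^2} \le \varepsilon_N) \ge e^{-cN\varepsilon_N^2}$ with $\varepsilon_N = N^{-\alpha/(2\alpha+2)}$, which holds by the $N^{-1/(2\alpha+2)}$-rescaling of Condition \ref{gprior} together with the centring inclusion $\theta_0 \in \mathcal H$ (the projection onto $E_{J_N}$ does not harm this, since $\theta_0 \in E_{J_0}$ is fixed by the projection); (ii) sieve metric entropy on RKHS balls, trivially controlled since $\dim E_{J_N} = O(\log \log N)$; and (iii) Hellinger-type tests arising from uniform $L^\infty$-boundedness (\ref{ubd}) and the forward Lipschitz bound (\ref{fwdstab}) of Proposition \ref{classical}.

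Combining the forward contraction with the time-averaged form of Theorem \ref{main1}B (Corollary \ref{usef}) yields
\begin{equation*}
\|\theta - \theta_0\|_{L^2} \le e^{c_2 c_P}\, \tilde\eta_N \lesssim (\log N)^{\beta} N^{-\alpha/(2\alpha+2)}
\end{equation*}
with high posterior probability, giving (\ref{invrat}) at the claimed rate $\eta_N$. Propagating forward via the Lipschitz estimate (\ref{fwdstab}) of Proposition \ref{classical} produces (\ref{darat}) uniformly over $t \in (0, T_p]$, and the posterior mean bound then follows from the same uniform-integrability argument used in Theorem \ref{mainstat} (cf.\ also \cite{MNP21, N23}).

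The main obstacle, and the reason for the precise growth condition $J_N = O(\log \log N)$, is the balance between the linear growth of $c_P$ in $J_N$ (via Weyl's law) and the polynomial forward rate in $N$: any faster growth of $J_N$ would inflate the inverse-stability constant $e^{c_2 c_P}$ beyond polylogarithmic order and destroy the overall polynomial rate. The band-limitedness restriction on $\theta_0$ similarly guarantees that the uniform control of $c_P$ from Remark \ref{stokspec} applies throughout the asymptotic regime; without it the ratio in (\ref{invpoinc}) need not stay bounded and one falls back on Theorem \ref{main1}A with its logarithmic penalty.
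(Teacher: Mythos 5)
Your proposal is correct and follows essentially the same route as the paper: obtain the forward contraction rate $N^{-\alpha/(2\alpha+2)}$ from the general Gaussian-prior inverse-problem theory of \cite{N23} applied to the projected prior (whose RKHS is $E_{J_N}\cap\mathcal H$ and which still contains $\theta_0$ once $J_N\ge J_0$), then invert via the Lipschitz estimate of Theorem \ref{main1}B/Corollary \ref{usef} with $c_P\lesssim\lambda_{J_N}\lesssim J_N=O(\log\log N)$ from Remark \ref{stokspec}, so that $e^{c_2c_P}=O((\log N)^\beta)$, and finally propagate forward with Proposition \ref{classical}B. The only difference is that you spell out the small-ball, entropy and testing inputs that the paper delegates wholesale to Theorem 2.2.2 of \cite{N23}; this is harmless and, if anything, clarifies why the projection does not damage the prior mass condition.
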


\begin{remark}[MAP Estimates] \normalfont \label{map}
Instead of computing posterior means via MCMC sampling one can also attempt to minimise the negative log-posterior density (\ref{post}) over a suitable approximation space of $\theta$'s. Resulting minimisers are commonly used in \textit{variational data assimilation} and are sometimes called `maximum a posteriori (MAP)' estimates. They co-incide with penalised least squares estimates where the squared RKHS norm of the prior determines the penalty functional (cf.~p.349 in \cite{GV17}). We note that in this setting, the shrinkage towards zero of the prior in Condition \ref{gprior} is natural and corresponds to the common $N$-dependent choice of the optimal scaling of the penalty used to obtain convergence rates (see, e.g., Sec.~10.1 in \cite{vdG00} in `direct' regression models, and also \cite{NvdGW20}). As $\theta \mapsto u_\theta$ is non-linear, the resulting optimisation problem is non-convex and so it is not clear that a global optimiser can be computed, but if it can, the analytical techniques developed here also allow one to give statistical guarantees for such methods. We refer to  \cite{NvdGW20} and Sec.~5.1 in \cite{BN21} where convergence rates of MAP estimates in general non-linear inverse problems are obtained. These theorems require one to verify analytical hypotheses about the associated forward maps that are slightly more involved than what is required in \cite{N23} (and used in the present contribution) to obtain convergence of the posterior measure and its mean. But in principle convergence rate results similar to Theorems \ref{mainstat} and \ref{fastrates} are provable for such MAP estimates as well by upgrading Proposition \ref{classical} appropriately and combining it with  Theorem \ref{main1}.
\end{remark}

\section{Proofs}\label{proofs}

\subsection{Proof of Theorem \ref{main1}}

The idea of the proof is based on \cite{BT73}. We can assume $w(0) \equiv u(0) - v(0) \neq 0$ in $V \subset L^2(\Omega)$ and by forward uniqueness therefore also $w(t)\equiv u(t)-v(t) \neq 0$ in $V \subset L^2(\Omega)$ for all $t$. The `Dirichlet ratio' at time $t$ is defined as
\begin{equation}\label{dirrat}
\Phi(t) = \frac{\|w(t)\|^2_{V}}{\|w(t)\|^2_{L^2}} = \frac{\langle A w(t), w(t)\rangle_{L^2}}{\|w(t)\|^2_{L^2}},~~ t \in [0,T],
\end{equation}
where we recall (\ref{divA}) and where we now write, unless specified otherwise, $L^2=L^2(\Omega)$. If we set $\bar u = (u+v)/2$ then we see from (\ref{B}) and an elementary calculation that
\begin{equation}
B(u,u) - B(v,v) = B(\bar u, w) + B(w, \bar u).
\end{equation}
Then since $u(t), v(t)$ solve the Navier-Stokes equations for the respective initial conditions we see that $w(t)$ solves the inhomogeneous non-linear parabolic equation in $H$ given by
\begin{equation}\label{pseudlin}
\frac{dw}{dt} + \nu Aw = g, ~~\text{ where } g(t)= -B(\bar u(t), w(t)) - B(w(t), \bar u(t))
\end{equation}
with initial condition $w(0)=u(0)-v(0)$. The following bounds for the Dirichlet ratio (\ref{dirrat}) will be the key to the proof of Theorem \ref{main1}.
\begin{lemma}
We have
\begin{equation}
\frac{d}{dt}\Phi(t) \le \frac{\|g(t)\|_{L^2}^2}{\nu \|w(t)\|_{L^2}^2}~~\forall t \in (0,T].
\end{equation}
Moreover,
\begin{equation}
\|g(t)\|_{L^2} \le k(t) \|w(t)\|_{V}~~\text{for all}~~ t \in (0,T]
\end{equation}
for some $k \in L^4((0,T))$ whose $L^4$-norm is bounded by a fixed constant that depends on $T, \nu, \|f\|_{L^2}$ and on the initial conditions $u(0), v(0)$ only via the upper bound $U \ge \|u(0)\|_V + \|v(0)\|_V$.
\end{lemma}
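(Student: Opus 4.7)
For the first inequality, I would compute $\tfrac{d}{dt}\Phi$ directly via the quotient rule, using $\|w\|_V^2 = \langle Aw, w\rangle_{L^2}$ from (\ref{divA}) and the self-adjointness of $A$. This yields
$$\frac{d}{dt}\Phi(t) = \frac{2}{\|w\|_{L^2}^2}\Big[\langle Aw, w'\rangle_{L^2} - \Phi \langle w, w'\rangle_{L^2}\Big].$$
Substituting $w' = -\nu Aw + g$ from (\ref{pseudlin}), the cross terms reorganise via the algebraic identity $\|Aw\|_{L^2}^2 - \Phi\|w\|_V^2 = \|Aw - \Phi w\|_{L^2}^2$, which in turn holds because $\Phi\|w\|_V^2 = \Phi^2 \|w\|_{L^2}^2$ by definition of $\Phi$. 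Thus the equality becomes
$$\frac{d}{dt}\Phi(t) = \frac{2}{\|w\|_{L^2}^2}\Big[-\nu\|Aw-\Phi w\|_{L^2}^2 + \langle Aw-\Phi w, g\rangle_{L^2}\Big],$$
and a one-line Young / completing-the-square step ($-\nu x^2 + xy \le y^2/(4\nu)$, applied with $x = \|Aw-\Phi w\|_{L^2}$ and $y = \|g\|_{L^2}$) gives the stated differential inequality. The only regularity subtlety is justifying that $\Phi$ is absolutely continuous in $t$, which follows from $w \in L^2((0,T], \mathcal D(A))$ together with $dw/dt \in L^2((0,T], H)$ inherited from Proposition \ref{classical}.

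For the second inequality, since $P$ is an orthogonal projection on $L^2$, I would start with $\|g\|_{L^2} \le \|(\bar u\cdot \nabla)w\|_{L^2} + \|(w\cdot\nabla)\bar u\|_{L^2}$. The first term admits the crude H\"older bound $\|\bar u\|_{L^\infty}\|\nabla w\|_{L^2} = \|\bar u\|_{L^\infty}\|w\|_V$. For the second, H\"older with exponents $4, 4$ combined with the 2D Ladyzhenskaya inequality $\|w\|_{L^4} \le C\|w\|_{L^2}^{1/2}\|w\|_V^{1/2}$ and the Poincar\'e inequality $\|w\|_{L^2} \le C\|w\|_V$ on the mean-zero space $V$ gives $\|(w\cdot\nabla)\bar u\|_{L^2} \le C\|w\|_V\|\nabla\bar u\|_{L^4}$. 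This suggests the choice
$$k(t) = C\big(\|\bar u(t)\|_{L^\infty} + \|\nabla\bar u(t)\|_{L^4}\big).$$

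It then remains to verify that $k \in L^4((0,T))$ with the claimed dependence of its norm. Agmon's 2D inequality $\|\phi\|_{L^\infty}^4 \le C\|\phi\|_{H^1}^2\|\phi\|_{H^2}^2$ and Ladyzhenskaya applied to $\nabla\bar u$ yield
$$\int_0^T \big(\|\bar u\|_{L^\infty}^4 + \|\nabla\bar u\|_{L^4}^4\big)dt \le C \sup_{0\le t\le T}\|\bar u(t)\|_V^2 \cdot \int_0^T \|\bar u(t)\|_{H^2}^2\, dt,$$
and the triangle inequality plus (\ref{fwdreg}) applied separately to $u$ and $v$ bounds both factors in terms of $U, \|f\|_{L^2}, \nu, T$ only. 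The main obstacle I anticipate is purely bookkeeping: keeping track of constants so they depend on the initial conditions only through the $V$-norm upper bound $U$, and pushing the $\bar u$-regularity through via linearity even though $\bar u$ itself does not solve a clean Navier-Stokes problem.
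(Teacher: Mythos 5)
Your proposal is correct and follows essentially the same route as the paper: the first bound comes from differentiating the Dirichlet quotient, substituting $dw/dt=-\nu Aw+g$, and completing the square (your orthogonal-decomposition form $-\nu\|Aw-\Phi w\|_{L^2}^2+\langle Aw-\Phi w,g\rangle_{L^2}$ is just a cleaner arrangement of the paper's algebra, and in fact yields the slightly sharper constant $\|g\|_{L^2}^2/(2\nu\|w\|_{L^2}^2)$). The second bound is handled identically — H\"older, Ladyzhenskaya, Poincar\'e and Agmon give $k(t)\lesssim \|\bar u(t)\|_{H^1}^{1/2}\|\bar u(t)\|_{H^2}^{1/2}$ up to equivalent factors, and the $L^4$ bound follows from $\sup_t\|\bar u(t)\|_V$ and $\int_0^T\|\bar u(t)\|_{H^2}^2\,dt$ being controlled via (\ref{fwdreg}) applied to $u$ and $v$ separately.
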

\begin{proof}
We take the $L^2$-inner product of equation (\ref{pseudlin}) with $w$ and $Aw$, respectively, which gives
$$\frac{1}{2}\frac{d}{dt} \|w(t)\|^2_{L^2} + \nu \|w(t)\|_{V}^2 = \langle g, w \rangle_{L^2},$$
$$\frac{1}{2}\frac{d}{dt} \|w(t)\|_{V}^2 + \nu \|Aw(t)\|_{L^2}^2 = \langle g, Aw \rangle_{L^2}$$
where we have also used (\ref{divA}) in the second identity. Therefore we have
\begin{align*}
\frac{d}{dt} \Phi(t) &= 2 \frac{\|w\|_{L^2}^2 \big(\langle g, Aw \rangle_{L^2} - \nu \|Aw\|_{L^2}^2 \big) - \|w\|_{V}^2\big(\langle g, w \rangle_{L^2} - \nu \|w\|_{V}^2 \big)}{\|w\|_{L^2}^4} \\
&=2 \nu \frac{\|w\|_{V}^4 - \|w\|_{V}^2 \langle g/\nu, w\rangle_{L^2} - \|w\|_{L^2}^2\|Aw\|_{L^2}^2 + \|w\|_{L^2}^2 \langle g/\nu, Aw \rangle_{L^2}}{\|w\|_{L^2}^4} \\
&=2\nu \frac{\big(\|w\|_V^2 - \langle g/2\nu, w\rangle_{L^2}\big)^2 - \langle g/2\nu, w \rangle_{L^2}^2 - \|w\|_{L^2}^2\|Aw\|_{L^2}^2 + \|w\|_{L^2}^2 \langle g/\nu, Aw \rangle_{L^2} }{\|w\|_{L^2}^4}\\
&=2\nu \frac{\langle Aw - g/2\nu, w\rangle_{L^2}^2 - \langle g/2\nu, w \rangle_{L^2}^2 - \|w\|_{L^2}^2\|Aw\|_{L^2}^2 + \|w\|_{L^2}^2 \langle g/\nu, Aw \rangle_{L^2} }{\|w\|_{L^2}^4}.
\end{align*}
using again (\ref{divA}) in the last step. By the Cauchy-Schwarz inequality, the last ratio is bounded from above by
\begin{align*}
\frac{2\nu}{\|w\|_{L^2}^4}\Big[ \big\|Aw-\frac{g}{2\nu}\big\|_{L^2}^2 \|w\|_{L^2}^2 + \frac{\|g\|_{L^2}^2 \|w\|_{L^2}^2}{4\nu^2} -  \|w\|_{L^2}^2\|Aw\|_{L^2}^2 + \|w\|_{L^2}^2 \big\langle \frac{g}{\nu}, Aw \big\rangle_{L^2}\Big] = \frac{\|g\|_{L^2}^2}{\nu \|w\|_{L^2}^2}
\end{align*}
and the first claim of the lemma follows. Using again the Cauchy-Schwarz inequality, the Poincar\'e inequality (p.292 in \cite{E10}) $$\frac{\|w(t)\|_{L^2}}{\|\nabla w(t)\|_{L^2}} \le c,$$ Ladyzhenskaya's inequalities for $L^4$-norms ((9.28) in \cite{R01}), and Lemma 4.9 in \cite{CF88} (in fact its periodic analogue), we can bound $\|g(t)\|_{L^2}$ by
\begin{align*}
&\|B(\bar u(t), w(t))\|_{L^2} +\|B(w(t), \bar u(t))\|_{L^2} \\
&\le \|w(t)\|_{L^4(\Omega)} \|\nabla \bar u(t)\|_{L^4(\Omega)} + \|\bar u(t)\|_\infty \|\nabla w(t)\|_{L^2} \\
&\le c_0\|w(t)\|^{1/2}_{L^2(\Omega)} \|\nabla w(t)\|_{L^2(\Omega)}^{1/2}\|\nabla \bar u(t)\|_{L^4(\Omega)} + \|\bar u(t)\|_\infty \|\nabla w(t)\|_{L^2(\Omega)} \\
&\le  \|w(t)\|_{V} \big(c_1\|\bar u(t)\|_{H^1}^{1/2} \|\bar u(t)\|_{H^2}^{1/2} + c_2 \|\bar u(t)\|^{1/2}_{L^2} \|\bar u(t)\|_{H^2}^{1/2} \big) \equiv k(t) \|w(t)\|_{V},
\end{align*}
with appropriate universal constants $c_i$. Now by (\ref{fwdreg}) the norms $\|\bar u(t)\|_{L^2}, \|\bar u(t)\|_{V}$ are bounded by a fixed constant and $\|\bar u(t)\|_{H^2}$ lies in $L^2((0,T))$ for strong solutions of the Navier-Stokes equations. In particular the $L^4$-norms of $k$ are bounded as required, completing the proof of the lemma.
\end{proof}

Combining the two inequalities from the last lemma we obtain the differential inequality
\begin{equation}
\frac{d}{dt} \Phi(t) \le \frac{k^2(t)}{\nu} \Phi(t)~~\forall t \in (0, T],
\end{equation}
which after integrating (i.e., Gronwall's inequality, p.711 in \cite{E10}) implies
\begin{equation}\label{initbd}
\Phi(t) \le \Phi(0) e^{\frac{1}{\nu}\int_0^t k^2(s)ds} \equiv \Phi(0) K(t) ~~\forall t \in (0, T],
\end{equation}
where we notice that $\Phi(0)$ is bounded by a constant from below by the Poincar\'e inequality (p.292 in \cite{E10}), and also finite as $w(0)\neq 0$ and $u(0), v(0) \in V$ by hypothesis.

Now taking inner products of (\ref{pseudlin}) with $w$ and noting that $\langle B(\bar u, w), w \rangle_{L^2}=0$ from (\ref{divB}) we arrive at
$$\frac{1}{2} \frac{d}{dt}\|w(t)\|_{L^2}^2 + \nu \|w(t)\|_{V}^2 + \langle B(w, \bar u), w\rangle_{L^2}=0,~~0<t \le T.$$ Using again a standard inequality (9.26) in \cite{R01} for the relevant tri-linear form, we can bound 
$$|\langle B(w, \bar u), w\rangle_{L^2(\Omega)}| \le C\|w\|_{L^2} \|w\|_{V} \|\bar u\|_{V}$$ for some $C<\infty$.  In this way we obtain for all $0<t \le T$ that
\begin{align*}
\frac{1}{2} \frac{d}{dt}\|w(t)\|_{L^2}^2 &\ge -\nu \|w(t)\|_V^2 - C \|w(t)\|_{L^2} \|w(t)\|_{V} \|\bar u(t)\|_{V} \\
& \ge \Big(-\nu \Phi(t) - C\|\bar u(t)\|_V \Phi^{1/2}(t)\Big) \|w(t)\|_{L^2}^2 \\
& \ge - \phi_T \|w(t)\|_{L^2}^2
\end{align*}
where
$$\phi_T \equiv \sup_{0 \le t \le T}[\nu \Phi(t) + C\|\bar u(t)\|_V \Phi^{1/2}(t)].
$$
But from this we deduce
\begin{equation}\label{keybd}
\|w(t)\|_{L^2}^2 \ge \|w(0)\|_{L^2}^2 e^{-2T \phi_T}~~\forall  t\in (0,T].
\end{equation}
We can also integrate the last inequality over the interval $[T_0,T]$ to obtain similarly
\begin{equation}\label{keybdint}
\frac{1}{T-T_0}\int_{T_0}^T \|w(t)\|_{L^2}^2 dt \ge \|w(0)\|_{L^2}^2 e^{-2T \phi_T},
\end{equation}
as will be relevant later, in Corollary \ref{usef} below. It remains to examine the constant $\phi_T$, and we distinguish the two cases A) and B) now.

\smallskip

B) Assume first the simpler case where an apriori bound (\ref{invpoinc}) on the `inverse Poincar\'e constant' $\Phi(0)\le c_P$ is available. Then assuming $c_P \ge 1$ without loss of generality we have from (\ref{initbd}) and Proposition \ref{classical} that $$\phi_T \le \sup_t[{\nu \Phi(t) + C\|\bar u(t)\|_V \Phi^{1/2}(t)}] \le \mathcal K c_P$$ for some constant $\mathcal K=\mathcal K(U, \|f\|_{L^2}, \nu, T)$ and so (\ref{keybd}) becomes
$$\|w(0)\|_{L^2} \le e^{c_2 c_P} \|w(t)\|_{L^2},~~c_2=2T\mathcal K,$$ which is the desired stability estimate.

\bigskip

A) We can proceed as in the last step but use the estimate $\Phi(0) \le U/ \|u(0)-v(0)\|_{L^2}$ instead of appealing to an inverse Poincar\'e constant. By the usual Poincar\'e inequality, $\Phi(0) \ge c>0$ and hence $\sqrt {\Phi(0)} \lesssim \Phi(0)$. Combining this with (\ref{initbd}) and (\ref{keybd}) gives
\begin{equation}\label{agee}
\|u(0)-v(0)\|_{L^2}^2 \exp\Big\{-\frac{c_1^2}{\|u(0)-v(0)\|_{L^2}^2}\Big\} \le \|u(t)-v(t)\|^2_{L^2},~~0<t \le T.
\end{equation}
for a constant $c_1=c_1(T,U,\nu, \|f\|_{L^2})$ that we can choose to exceed $\sup_{0 \le t \le T}\|u(t)-v(t)\|_{L^2}$ in view of Proposition \ref{classical}. Since $e^{-c_1^2/x^2} \le (x/c_1)^2$ for all $x>0$ we deduce
$$\exp\Big\{-\frac{2c_1^2}{\|u(0)-v(0)\|_{L^2}^2}\Big\} \le  \frac{\|u(t)-v(t)\|^2_{L^2}}{c_1^2} \equiv Z$$ with right hand side $Z <1$. Hence $\log Z<0$, so taking logarithms in the previous display gives
$$-\frac{2c_1^2}{\|u(0)-v(0)\|_{L^2}^2} \le -\log (1/Z) \iff  \|u(0)-v(0)\|_{L^2}^2 \le \frac{2c_1^2}{\log (1/Z)}$$
from which it follows that
$$\|u(0)-v(0)\|_{L^2} \leq \sqrt{2} c_1  \Big(\log \frac{c_1^2}{\|u(t)-v(t)\|^2_{L^2}} \Big)^{-1/2},$$ completing the proof.

\subsection{Proof of Theorem \ref{stabsharp}}

\begin{proof}
For $j =1,2,\dots$ we define univariate trigonometric polynomials on $[0,2\pi]$ as $$\phi_j(x) =  \cos(jx), x \in [0,2\pi].$$ Then for initial conditions $w_j(0)=\phi_j/j^2$ the (scalar) heat equation $$\frac{\partial w}{\partial t} -  \frac{\partial^2 w}{\partial x^2} = 0~~\text{on } [0,2\pi] \times [0,T]$$ has unique solutions $$w_j(t,x)=\frac{-e^{-j^2 t}}{j^2} \phi_j(x),~x \in [0,2\pi],~ t \in [0,T].$$  Now following an idea in Remark 7 in \cite{FS84}, consider the periodic Navier-Stokes equation (\ref{nstokes}) for initial conditions $$u_j(0,x) = j^{-2} (\phi_j(x_1-x_2), \phi_j(x_1-x_2)),~~x = (x_1,x_2) \in \Omega,$$ which clearly satisfy $\nabla \cdot u_j(0, \cdot)=0$ and hence lie in $C^\infty(\Omega) \cap V$. The vector fields $$u_j(t,x) = (w_j(t, x_1-x_2), w_j(t, x_1-x_2)), ~~x = (x_1,x_2) \in \Omega,$$ are also divergence free, have \textit{vanishing non-linear term} $(u_j \cdot \nabla) u_j=0$, and solve the Navier-Stokes equations (\ref{nstokes}) for $\nu=1/2, f=0$ and the given initial conditions. By translation invariance of Lebesgue measure
$$\int_0^{2\pi} \int_0^{2\pi} \phi^2_j(x_1-x_2) dx_1 dx_2 = 2\pi \int_0^{2\pi} \phi^2_j(x_1) dx_1 $$
from which one easily deduces
$$\|u_j(0)\|_{L^2([0,2\pi]^2)^2} \simeq j^{-2},~~ \|u_j(t)\|_{L^2([0,2\pi]^2)^2} \simeq e^{-j^2 t}/j^2.$$
The Sobolev norms of these initial conditions can also be computed directly and are of order $\|u_j(0)\|^2_{H^2} \lesssim \frac{j^2}{j^2} \le const.$ The inequality (\ref{invest}) now follows from (\ref{invest0}), $v(0)=v(t)=0$ and elementary properties of the exponential/logarithm map.
\end{proof}

\subsection{Proof of Theorems \ref{mainstat} and \ref{fastrates}}

We will apply the general theory for non-linear Bayesian inverse problems from \cite{N23} in conjunction with Proposition \ref{classical} and the stability estimate Theorem \ref{main1}. In fact (unless $T_0=T$) we will use the following corollary to this theorem, proved by just replacing (\ref{keybd}) by (\ref{keybdint}) in the last step of its proof.
\begin{corollary} \label{usef}
In the setting of the Theorem \ref{main1} B),  there exists a constant $c_2$ depending on $U, \nu, T, \|f\|_{L^2}$ such that
\begin{equation}\label{useful}
\|u(0)- v(0)\|_{L^2(\Omega)} \le e^{c_2 c_P} \Big(\frac{1}{T-T_0}\int_{T_0}^T \|u(t, \cdot) - v(t, \cdot)\|^2_{L^2(\Omega)}dt \Big)^{1/2},
\end{equation}
while in the setting of Theorem \ref{main1} A), there exist constants $c_0, c_1$ depending on $U, \nu, T, \|f\|_{L^2}$ such that
\begin{equation}\label{usefullog}
\|u(0)-v(0)\|_{L^2(\Omega)} \leq c_0 \Big(\log \frac{c_1}{\frac{1}{T-T_0}\int_{T_0}^T \|u(t, \cdot) - v(t, \cdot)\|^2_{L^2(\Omega)}dt} \Big)^{-1/2}
\end{equation}
where $\frac{1}{T-T_0}\int_{T_0}^T \|u(t, \cdot) - v(t, \cdot)\|^2_{L^2(\Omega)}dt<c_1$.
\end{corollary}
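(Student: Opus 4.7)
The corollary is essentially a transparent corollary of Theorem \ref{main1}, and my plan is to follow that theorem's proof line by line and only substitute the time-averaged bound (\ref{keybdint}) for the pointwise bound (\ref{keybd}) at the very final step. Both (\ref{keybd}) and (\ref{keybdint}) have already been established in the proof of Theorem \ref{main1}: integrating the lower bound $\tfrac{1}{2}\tfrac{d}{dt}\|w(t)\|_{L^2}^2\ge -\phi_T\|w(t)\|_{L^2}^2$ from $0$ to $t$ gives (\ref{keybd}), while integrating a second time in the temporal variable over $[T_0,T]$ yields (\ref{keybdint}) with the same exponential constant $\phi_T$. No new PDE analysis is required; only algebraic rearrangement of existing inequalities is needed.

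For part (B), the plan is to use the bound $\phi_T\le \mathcal{K}c_P$ (established in case B of Theorem \ref{main1} from the Poincaré-type hypothesis on $\Phi(0)$ together with Proposition \ref{classical}). Inserting this into (\ref{keybdint}) gives
\begin{equation*}
\|w(0)\|_{L^2}^2 e^{-2T\mathcal{K}c_P}\le \frac{1}{T-T_0}\int_{T_0}^T\|w(t)\|_{L^2}^2\,dt,
\end{equation*}
and taking square roots with $c_2:=2T\mathcal{K}$ (or any equivalent constant absorbing the square root) yields the claimed Lipschitz-type estimate (\ref{useful}).

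For part (A), the plan is to mirror the concluding argument of Theorem \ref{main1} A) verbatim with the time-averaged right-hand side. Using $\Phi(0)\le U/\|w(0)\|_{L^2}^2$ in (\ref{initbd}) gives $\phi_T\le C_1/\|w(0)\|_{L^2}^2$ for an appropriate constant; inserting into (\ref{keybdint}) produces the analogue of (\ref{agee}), namely
\begin{equation*}
\|w(0)\|_{L^2}^2\exp\!\Big\{-\tfrac{c_1^2}{\|w(0)\|_{L^2}^2}\Big\}\le \frac{1}{T-T_0}\int_{T_0}^T\|w(t)\|_{L^2}^2\,dt.
\end{equation*}
Choosing $c_1$ large enough (using Proposition \ref{classical} to bound $\sup_t\|w(t)\|_{L^2}$, which also bounds the time-average) so that the right-hand side divided by $c_1^2$ is less than one, we apply $e^{-c_1^2/x^2}\le (x/c_1)^2$, take logarithms as in the proof of Theorem \ref{main1} A), and invert to obtain (\ref{usefullog}).

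The only subtlety, and the one place where care is warranted, is ensuring that the constants $c_0,c_1$ in the logarithmic estimate can be chosen uniformly so that the argument of the logarithm exceeds $1$ throughout the admissible range; this is handled by enlarging $c_1$ to dominate the a priori bound on $\sup_{0\le t\le T}\|u(t)-v(t)\|_{L^2}^2$ provided by Proposition \ref{classical}, which automatically dominates the time-average as well. Beyond this bookkeeping, the proof is a direct transcription of the Theorem \ref{main1} argument.
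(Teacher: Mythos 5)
Your proposal is correct and matches the paper's own argument exactly: the paper proves Corollary \ref{usef} precisely by substituting the time-averaged bound (\ref{keybdint}) for the pointwise bound (\ref{keybd}) in the final steps of the proof of Theorem \ref{main1}, with the constants handled as you describe. Nothing further is needed.
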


Now in the notation of Section 1.2.1 in \cite{N23}, the parameter space $\Theta$ of initial conditions is chosen as the subspace $$\Theta \equiv V \cap H^2(\Omega)^2~~\text{of } L^2(\mathcal Z, W)$$ with $V$ from (\ref{V}), and where we choose $\mathcal Z=\Omega$ and $W=\R^2$. The forward map $$\theta \mapsto \G(\theta)=u_\theta,~ \G: \Theta \to L^2(\mathcal X, \R^2),$$ is the solution map of the PDE (\ref{nstokes}) with initial condition $u(0) = \theta$ (cf.~Proposition \ref{classical}), where we set $\mathcal X = (T_0,T] \times \Omega$ and the finite-dimensional vector space $V$ in \cite{N23} (different from $V$ in our context) is identified with $\R^2$ here. Then on $\mathcal X$ we have the uniform probability measure $\lambda$ (or $\lambda = \delta_{T} \otimes \lambda_\Omega$ if $T_0=T$), while on $\mathcal Z$ we can just take $\zeta$ equal to Lebesgue measure, so that our statistical model (\ref{model}) co-incides precisely with the one in eq.~(1.9) in \cite{N23} with `random design' $(t_i,X_i)_{i=1}^N$ equal to the $(X_i)_{i=1}^N$ there. By hypothesis, the prior is supported in $V \cap \mathcal D(A) \equiv \mathcal R$ almost surely and we will apply Theorem 2.2.2 (and Theorem 1.3.2) in \cite{N23} with regularisation space $\mathcal R$, norm $\|\cdot\|_{\mathcal R} \equiv \|\cdot\|_{H^2}$, and choices $$\kappa=0, \delta_N = N^{-\alpha/(2\alpha + 2)}, ~d=2.$$  The Condition 2.1.1 in \cite{N23} is then verified in view of Proposition \ref{classical} , and we obtain that for some large enough constant $M>0$, as $N \to \infty$,
\begin{equation}\label{fwdrate}
\Pi \Big(\theta \in V:  \|\theta\|_{H^2} \le M, \|u_\theta - u_{\theta_0}\|_{L^2(\mathcal X, \lambda)} \le M \delta_N|Z^{(N)}\Big) \to^{P_{\theta_0}^N} 1
\end{equation}
where the contraction rate holds for the norm given by
\begin{align}\label{KLd}
\|v\|^2_{L^2(\mathcal X, \lambda)} & = \|v(T)\|^2_{L^2(\Omega)}~~~~~~~~~~~~~~~~~~~~~~~~~\text{ when } T_0=T, \\
&= \frac{1}{T-T_0}\int_{T_0}^T \|v(t)\|^2_{L^2(\Omega)}dt~~~\text{ when } 0 \le T_0 <T. \notag
\end{align}
By the stability estimate (\ref{usefullog}) from Corollary \ref{usef} with $u=u_\theta, v=u_{\theta_0}$ this gives contraction rate $\eta_N$ for $\|\theta-\theta_0\|_{L^2}$ and proves (\ref{invrat}) for $0 \le T_0<T$; when $T_0=T$ we use Theorem \ref{main1}A) directly instead of its corollary. The bound (\ref{darat}) then follows from what precedes and the forward estimate from Proposition \ref{main1}, Part B) (now with $T_p$ replacing $T$). The last claim of Theorem \ref{mainstat} (convergence rate of the posterior mean) follows from the preceding estimates and a  uniform integrability argument given in Theorem 2.3.2 in \cite{N23}, and again Proposition \ref{classical}, and details are left to the reader.

The proof of Theorem \ref{fastrates} follows the same pattern (cf.~also Exercise 2.4.3 in \cite{N23}), noting that the RKHS is now $E_J \cap \mathcal H$ were $E_J$ is the linear span of the Stokes-eigenfunctions up to order $J$. The preceding arguments go through since $\theta_0 \in E_J \cap \mathcal H$ for all $N$ (and thus $J$) large enough. We can then use the stronger stability estimate (\ref{useful}) with inverse Poincare constant now growing at most as $c_P = O(\log \log N)$ as explained in Remark \ref{stokspec}, so that the result follows by introducing the additional log-factor $e^{c_2 c_P} =O((\log N)^{\beta})$ in the contraction rate.

\subsection{Proof of Theorem \ref{minmax}}
We use a standard lower bound proof technique from nonparametric statistics, specifically Theorem 6.3.2 in \cite{GN16} (or since we will only use a two-hypotheses case, we can also argue as in the the proof of Theorem 2.2 in \cite{AN19}). Recalling (\ref{KLd}), the Kullback-Leibler divergence in our measurement model (\ref{model}) is $$KL(P_\theta^N, P_{\theta_0}^N) = \frac{N}{2} \|u_\theta - u_{\theta_0}\|_{L^2(\mathcal X, \lambda)}^2,~~\theta \in \Theta,$$ see Proposition 1.3.1 in \cite{N23} for a proof. Consider first the case $0<T_0 <T$. Using the base hypothesis $\theta_0=0$ with $u_{\theta_0} =0$, and alternative hypothesis $\theta_j=u_j(0)$ from Theorem \ref{stabsharp}, we can for every $\mu>0$ choose  $j=j_N= \sqrt{L \log N}$ such that $$N\int_{T_0}^T \|u_j(t)\|^2_{L^2(\Omega)}dt = \frac{N}{j^4} \int_{T_0}^T e^{-2j^2 t}dt = \frac{N}{2j^6} \big[e^{-2j^2T_0} - e^{-2j^2 T} \big] \lesssim N e^{-j^2T_0} \le \mu$$ for $L=L(\mu, T_0)$ large enough. At the same time $$\|\theta_{j_N} - \theta_0\|_{L^2} \gtrsim \frac{1}{\log N}$$ by Theorem \ref{stabsharp}, which verifies (6.100) in \cite{GN16} with $r_n \simeq 1/\log N$, and so the result follows from Theorem 6.3.2 in \cite{GN16} with $M=2$ and $\mu$ small enough (cf.~also (6.99) to obtain the `in probability' version of the lower bound). The same proof works for $T_0=T>0$ fixed without even estimating integrals over $(T_0, T]$.

\subsection{Proof of Proposition \ref{classical}}\label{appendix}

A proof of the existence of strong solutions of (\ref{nstokes}) can be found in Theorem 9.5 in \cite{R01}. It remains to prove the inequalities. Part B) can be found before eq.~(10.6) in \cite{CF88} or (9.38) in \cite{R01}, and also follows from simple variations of what follows, hence will not be repeated here. 

To establish the inequalities in part A) of the proposition, we will show
\begin{equation}\label{keyfwdbd}
\sup_{u(0) \in V: \|u(0)\|_{H^2} \le m} ~~ \sup_{0 \le t \le T}\|u(t)\|_{H^2(\Omega)} \le c
\end{equation}
for a constant $c=c(m, \|f\|_{H^1}, T, \nu)$. This implies (\ref{ubd}) by the Sobolev imbedding $H^2 \subset L^\infty$ and also gives (\ref{fwdreg}) for initial conditions bounded in $H^2$ (rather than in $H^1$). The proof of (\ref{fwdreg}) under weaker $H^1$-conditions follows from similar arguments, but we omit it here as we always have $\theta =u(0)$ bounded in $H^2$ elsewhere in the statistical results of this article.

\smallskip

We present here a formal proof of (\ref{keyfwdbd}) which can be justified rigorously by establishing it first for the solutions of the Galerkin approximation system of ODEs and then passing to the limit. This limiting procedure is in complete analogy to the arguments used for establishing existence for solutions cited above and will be left to the reader. We start the proof with the following preliminary a-priori estimate: Taking the $L^2$-inner product of (\ref{nstokes}) with $u$ and using (\ref{divA}), (\ref{divB}) we obtain
\begin{align*}
\frac{1}{2} \frac{d}{dt} \|u\|_{L^2}^2 + \nu \|\nabla u\|_{L^2}^2 &= \langle f, u \rangle_{L^2}  \le \lambda^{-1} \|f\|_{L^2} \|\nabla u\|_{L^2} \\
& \le \frac{1}{2 \nu \lambda} \|f\|^2_{L^2} + \frac{\nu}{2} \|\nabla u\|_{L^2}^2
\end{align*}
where we have used the Cauchy-Schwarz, the Poincar\'e (with constant $\lambda$) and the Young inequalities. This readily implies,
\begin{equation*}
\frac{d}{dt} \|u\|_{L^2}^2 + \nu  \|\nabla u\|_{L^2}^2 \le \frac{\|f\|_{L^2}^2}{\nu \lambda},
\end{equation*}
which can be integrated to give
\begin{equation}\label{e27}
\int_0^T \|\nabla u(t)\|_{L^2}^2 dt \le \frac{\|u(0)\|_{L^2}^2}{\nu} + T \frac{\|f\|_{L^2}^2}{\nu^2 \lambda} \le K_1 \equiv K_1(U,\nu, T, \|f\|_{L^2}),
\end{equation}
which we shall use below. 

Now defining the vorticity $\omega = \nabla^\perp \cdot u$ for strong solutions $u \in V$ of (\ref{nstokes}) we see that  $\|\nabla \omega \|_{L^2} = \|\Delta u\|_{L^2}$. Therefore, we can prove (\ref{keyfwdbd}) by bounding $\|\nabla \omega \|_{L^2}$ uniformly in time $t \in [0,T]$. Applying the ($2D$) curl operation $\nabla \times$ to (\ref{nstokp}) one obtains the \textit{vorticity} formulation of the Navier-Stokes equations
\begin{equation}\label{E2.5}
\frac{\partial}{\partial t} \omega - \nu \Delta \omega + (u \cdot \nabla) \omega = \nabla \times f.
\end{equation}
Taking the $L^2$-inner product of the last equation with $-\Delta \omega$ and using (\ref{divA}) we see that $\omega = \nabla^\perp \cdot u$ verifies the equation
\begin{equation}\label{E6}
\frac{1}{2} \frac{d}{dt} \|\nabla \omega\|_{L^2}^2 + \nu \|\Delta \omega\|_{L^2}^2 - \int_\Omega [(u \cdot \nabla) \omega]\Delta \omega = -\int_\Omega (\nabla \times f) \Delta \omega.
\end{equation}
 The Cauchy-Schwarz inequality implies $$\Big |-\int_\Omega (\nabla \times f) \Delta \omega\Big| \le \|\nabla f\|_{L^2} \|\Delta \omega\|_{L^2}$$ and we can further estimate, by integration by parts
\begin{align*}
\Big | -\int_\Omega [(u \cdot \nabla) \omega]\Delta \omega \Big| &= \Big| - \sum_{l=1}^2 \int_\Omega [(u \cdot \nabla) \omega] \partial_l^2 \omega \Big| = \Big| \sum_{l=1}^2 \int_\Omega [(\partial_l u \cdot \nabla \omega)\partial_l \omega \Big| \\
&\le \|\nabla u\|_{L^2} \|\nabla \omega\|_{L^4}^2 \le c_0 \|\nabla u\|_{L^2} \|\nabla \omega\|_{L^2} \|\Delta \omega\|_{L^2}
\end{align*}
since $\int_\Omega [(u \cdot \nabla) \partial_l \omega)] \partial_l \omega =0$ as in (\ref{divB}), and where we have used, for every $\phi \in H^1$ with ~$\int_\Omega \phi =0$, the Ladyzhenskaya inequality $\|\phi\|_{L^4}^2 \leq c_0 \|\phi\|_{L^2} \|\nabla \phi\|_{L^2}$ which follows from (9.28) in \cite{R01} and the Poincar\'e inequality. Substituting the previous bounds into (\ref{E6}) gives
\begin{equation*}
\frac{1}{2} \frac{d}{dt} \|\nabla \omega\|_{L^2}^2 + \nu \|\Delta \omega\|_{L^2}^2 \le  \|\nabla f\|_{L^2} \|\Delta \omega\|_{L^2} + c_0 \|\nabla u\|_{L^2} \|\nabla \omega\|_{L^2} \|\Delta \omega\|_{L^2}.
\end{equation*}
By Young's inequality for products we deduce
\begin{equation*}
\frac{1}{2} \frac{d}{dt} \|\nabla \omega\|_{L^2}^2 + \nu \|\Delta \omega\|_{L^2}^2 \le \frac{\|\nabla f\|_{L^2}^2}{\nu} +\frac{\nu}{4}\|\Delta \omega\|^2_{L^2} + \frac{c_0^2}{\nu} \|\nabla u\|^2_{L^2} \|\nabla \omega\|_{L^2}^2 + \frac{\nu}{4} \|\Delta \omega\|_{L^2}^2
\end{equation*}
and rearranging we obtain
\begin{equation*}
\frac{d}{dt} \|\nabla \omega\|_{L^2}^2 \le \frac{2}{\nu}\|\nabla f\|_{L^2}^2 + \frac{2c_0^2}{\nu} \|\nabla u\|^2_{L^2} \|\nabla \omega\|_{L^2}^2.
\end{equation*}
We can apply Gronwall's inequality (p.711 in \cite{E10}) to deduce
\begin{equation}\label{gronw}
\|\nabla \omega(t)\|_{L^2}^2 \le  e^{\frac{2c_0^2}{\nu}\int_0^t  \|\nabla u(s)\|^2_{L^2}ds} \Big[\|\nabla \omega(0)\|_{L^2}^2  + \frac{2t}{\nu} \|\nabla f\|_{L^2}^2\Big],~~ 0 < t \le T.
\end{equation}
Now we use (\ref{e27}) to bound the constants in the preceding integral and deduce
\begin{equation}
\|u\|_{H^2} \lesssim \|\Delta u(t)\|_{L^2}^2 = \|\nabla \omega(t)\|_{L^2}^2 \lesssim  \|\Delta u(0)\|^2_{L^2} + \|f\|_{H^1}^2 \le K_2,~~ 0 \le t \le T,
\end{equation}
for $K_2=K_2(T, \nu, \|f\|_{H^1}, m)$, which implies (\ref{keyfwdbd}) as desired.

\bigskip

\textbf{Acknowledgements.} The authors are grateful to the associate editor and three anonymous referees for their many helpful remarks and suggestions in the revision process. RN was supported by an ERC Advanced Grant (Horizon Europe UKRI G116786) as well as by EPSRC grant EP/V026259. The research of EST was made possible by NPRP grant \#S-0207-200290 from the Qatar National Research Fund (a member of Qatar Foundation), and is based upon work supported by King Abdullah University of Science and Technology (KAUST) Office of Sponsored Research (OSR) under Award No. OSR-2020-CRG9-4336. The work of EST has also benefited from the inspiring environment of the CRC 1114 ``Scaling Cascades in Complex Systems'', Project Number 235221301, Project A02, funded by Deutsche Forschungsgemeinschaft (DFG).

 \bibliography{navstok}{}
\bibliographystyle{imsart-number}

\end{document}